\newtheorem{theorem}{Theorem}
\newtheorem{conjecture}[theorem]{Conjecture}
\newtheorem{corollary}[theorem]{Corollary}
\newtheorem{lemma}[theorem]{Lemma}
\newtheorem{proposition}[theorem]{Proposition}
\def\P{\mathbf {P}}
\def\E{\mathbf {E}}
\def\Z{\mathbb {Z}}
\begin{document}

\author{Robbert Fokkink, Ludolf Meester, and Christos Pelekis}

\keywords{bold play, intersecting family, stochastic inequality, tail probability.}
\subjclass[2010]{60G50, 05D05}

\title{Optimizing stakes in simultaneous bets}

\begin{abstract}
We want to find the convex combination $S$ of iid Bernoulli random
variables that maximizes $\P(S\geq t)$ for a given threshold~$t$. 
Endre Cs\'oka conjectured that such an $S$ is an average if $t\geq p$,
where $p$ is the success probability of the Bernoulli random variables. We prove this
conjecture for a range of $p$ and $t$.   
\end{abstract}

\maketitle

We study tail probabilities of convex combinations
of iid Bernoulli random variables. More specifically,
let $\beta_1, \beta_2, \ldots$ be an infinite sequence of 
independent Bernoulli random variables with success probability~$p$,
and let $t\geq p$ be a real number. We consider the
problem of maximizing $\P(\sum c_i\beta_i\geq t)$ over
all sequences $c_1, c_2, \ldots$ of non-negative reals
such that
$\sum c_i=1$. 
By the weak law of large numbers, 
the supremum of $\P(\sum c_i\beta_i\geq t)$ is equal
to $1$ if $t<p$. 
That is why we restrict our attention to $t\geq p$.

As a motivating example, consider a venture capitalist who
has a certain fortune $f$ to invest in any number of
startup companies. Each startup has an (independent) probability~$p$ of succeeding,
in which case it yields a return $r$ on investment.
If the capitalist 
divides his fortune into a (possibly infinite!) sequence $f_i$ of investments,
then his total return is $\sum rf_i \beta_i$. Suppose he wants to
maximize the probability that the total return reaches a threshold~$d$.
Then we get our problem with~$t=\frac d {rf}$. 

The problem has a how-to-gamble-if-you-must flavor~\cite{DS}: the capitalist places stakes $c_i$ on
a sequence of simultaneous bets. There is no need to place 
stakes higher than $t$. The way to go all out, i.e., \emph{bold play}, 
is to stake $t$ on $\lfloor \frac 1t\rfloor$
bets, but this is not a convex combination. That is why we say that
staking $\frac 1k$ on $k$ bets with $k={\lfloor \frac 1t\rfloor}$ is bold play.

In a convex combination $\sum c_i\beta_i$ we order $c_1\geq c_2\geq c_3\geq \ldots$.
We denote the sequence $(c_i)$ by $\gamma$ and write $S_\gamma=\sum c_i\beta_i$. 
We study the function
\begin{equation}\label{equ1}
\pi(p,t)=\sup \left\{\P\left(S_\gamma\geq t\right)\mid \gamma\right\}
\end{equation}
for $0\leq p\leq t\leq 1$.
It is non-decreasing in $p$ and non-increasing in $t$.
The following has been conjectured by Cs\'oka~\cite{C}, who was inspired by some
well known open problems in combinatorics:

\begin{conjecture}[Cs\'oka]\label{conj1}
For every $p$ and $t$ there exists a $k\in\mathbb N$ such that
$\pi(p,t)$ is realized by $c_i=\frac 1k$
if $i\leq k$ and $c_i=0$ if $i>k$ for some $k\in\mathbb N$.
In other words, the maximal probability is realized by an
average.
\end{conjecture}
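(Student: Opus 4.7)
The plan is to establish a combinatorial reformulation and then apply a smoothing/swap argument aimed at driving the weights to equality. First I would verify that the supremum in~\eqref{equ1} can be approached by $\gamma$ with \emph{finite} support: given any $\gamma=(c_i)_{i\geq1}$ with $\sum c_i=1$, keeping the $N$ largest weights and rescaling loses only a vanishing amount of probability (quantitatively by a truncation estimate on the tail contribution $\sum_{i>N}c_i\beta_i$), so it suffices to work with finitely many weights. For such a $\gamma=(c_1,\ldots,c_k)$ with $\sum c_i=1$, I introduce the \emph{winning family}
\[
\mathcal{F}(\gamma)=\Bigl\{\,T\subseteq[k]\ :\ \sum_{i\in T}c_i\geq t\,\Bigr\},
\]
so that $\P(S_\gamma\geq t)=\sum_{T\in\mathcal{F}(\gamma)}p^{|T|}(1-p)^{k-|T|}$. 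An average $c_i=1/k$ produces a \emph{size-threshold} family $\{T:|T|\geq\lceil tk\rceil\}$, so Conjecture~\ref{conj1} is equivalent to the statement that, for some $k$, a size-threshold family is optimal.

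Next I would analyse a \emph{swap} operation. Given an optimal $\gamma$ with two weights $c_i>c_j$, replace $(c_i,c_j)$ by $(c_i-\epsilon,c_j+\epsilon)$ to obtain $\gamma_\epsilon$. Only subsets containing exactly one of $i,j$ can change membership in $\mathcal{F}(\gamma_\epsilon)$ as $\epsilon$ grows, and they come in natural companion pairs $(T\cup\{i\},T\cup\{j\})$ (with $T\cap\{i,j\}=\emptyset$) carrying the same Bernoulli mass $p^{|T|+1}(1-p)^{k-|T|-1}$. Direct computation shows that $T\cup\{j\}$ enters $\mathcal{F}$ at $\epsilon=t-\sum_{l\in T}c_l-c_j$ while $T\cup\{i\}$ leaves at $\epsilon=\sum_{l\in T}c_l+c_i-t$; so the gain precedes the loss exactly when $\sum_{l\in T}c_l>t-\tfrac12(c_i+c_j)$. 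If, by summing over companion pairs, one can show that the ``gain-first'' pairs outweigh the ``loss-first'' pairs, then $\epsilon$ can be pushed up to $\tfrac12(c_i-c_j)$ without decreasing the probability, equalising the two weights; iterating drives all positive weights to a common value $1/k$.

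The main obstacle is precisely the balance between gain-first and loss-first companion pairs: it amounts to a combinatorial inequality on the distribution of the weight-sums $\sum_{l\in T}c_l$ around the threshold $t$, weighted by the Bernoulli product measure on subsets. When $p$ and $t$ lie in a range where the near-threshold subsets all have essentially a single cardinality -- so only one binomial term is active -- the counting reduces to a routine estimate and the swap lemma goes through; I would expect this is the range in which the authors push the argument. Outside this range several cardinalities of subsets cluster near the threshold simultaneously, and the required inequality appears to be of the same level of difficulty as the intersecting-family conjectures alluded to in the introduction. Consequently, the framework above should yield Conjecture~\ref{conj1} on a restricted parameter region while leaving the full conjecture tied to deeper open combinatorial problems.
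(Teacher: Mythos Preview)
The statement is a \emph{conjecture}, and the paper does not prove it in full; it establishes it only for the shaded parameter region of Figure~\ref{fig:fig1}. So the relevant comparison is between your partial strategy and the paper's partial results.

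Your reduction to finite $\gamma$ and the reformulation via the winning family $\mathcal{F}(\gamma)$ match Proposition~\ref{pro2} and equation~\eqref{equ2} in the paper. From that point on, however, the approaches diverge. The paper does \emph{not} attempt any swap or smoothing of the $c_i$. Instead it exploits known extremal results on set families: for $t>\tfrac12$ the family $\mathcal{F}_{t,\gamma}$ is intersecting and Fishburn--Frankl--Freed--Lagarias--Odlyzko (Theorem~\ref{thm3}) immediately gives the optimum; for $t>\tfrac1{k+1}$ the matching number is at most $k$ and Frankl's bound toward the Erd\H{o}s matching conjecture does the job (Theorem~\ref{thm18}); for $p>\tfrac12$ the paper uses a coupling that distributes the coefficients uniformly over $\mathbb Z/b\mathbb Z$ and then applies bespoke combinatorial lemmas about cross-intersecting families of intervals (Theorem~\ref{thm2}). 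Each of these yields a sharp bound on a concrete region, which your outline does not.

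Your swap proposal has a genuine gap even in the range you describe as routine. Pushing $\epsilon$ from $0$ to $\tfrac12(c_i-c_j)$ changes the family only through companion pairs $(T\cup\{i\},T\cup\{j\})$, and the net effect at the endpoint is governed by whether $\sum_{l\in T}c_l$ falls in the interval $[\,t-\tfrac12(c_i+c_j),\,t-c_j\,)$ (a gain) or in $[\,t-c_i,\,t-\tfrac12(c_i+c_j)\,)$ (a loss). You need the $p$-biased measure of the first event to dominate the second, summed over all $T\subset[k]\setminus\{i,j\}$. Restricting to a single cardinality does \emph{not} make this ``a routine estimate'': it is an anti-concentration statement about the subset-sum $\sum_{l\in T}c_l$ with arbitrary remaining weights, and there is no reason it should hold without further hypotheses. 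Indeed, since the optimal $\gamma$ is not always the uniform one on the support (e.g.\ Corollary~\ref{cor2} and the $k=1$ case of Proposition~\ref{thm2diag} show that averages over $3$ or $2k{+}1$ coins can beat bold play), a naive equalising swap \emph{must} sometimes strictly decrease the probability, so your proposed monotonicity cannot hold in general. The paper sidesteps this entirely by never comparing two non-uniform $\gamma$; it compares an arbitrary $\gamma$ directly to bold play via global extremal bounds on $\mathcal{F}_{t,\gamma}$.
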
 

If the conjecture is true, then $\pi(p,t)$ is a binomial tail probability and we still need to determine
the optimal $k$. Numerical results of Cs\'oka suggest that bold play is optimal for most parameter values.

We are able to settle the conjecture for certain parameter values, as illustrated
in figure~\ref{fig:fig1} below.

\begin{figure}[htbp]
	\centering
		\includegraphics[width=0.50\textwidth]{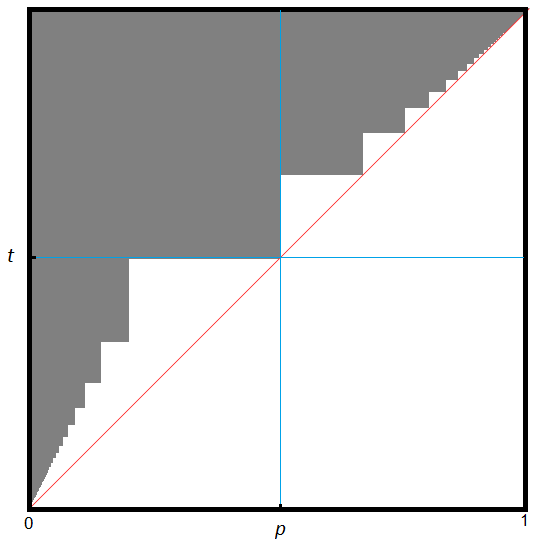}
	\caption{The shaded region represents all $(p,t)$ for which we are able
	to settle the conjecture. In all these cases bold play is optimal. Our results
	can be divided into three parts: favorable odds $p>\frac 12$, high threshold $t\geq \frac 12$,
	and unfavorable odds $p<\frac 12$.}
	\label{fig:fig1}
\end{figure}

It is natural to expect, though we are unable to prove this, that a gambler becomes
bolder if the threshold goes up or if the odds go down. In particular,
if $p'\leq p$ and $t'\geq t$ and if bold play is optimal for $(p,t)$, then it is natural to
expect
that bold play is optimal for $(p',t')$ as well. This is clearly visible in the figure
above, which is a union of rectangles with lower right vertices 
$(\frac k{k+1},\frac k{k+1})$ and $(\frac 1{2k+1},\frac 1{k+1})$ for $k\in\mathbb N$.
 
Our paper is organized as follows. We first lay the groundwork by analyzing properties
of $\pi(p,t)$ and prove that the supremum in equation~\ref{equ1} is a maximum. Then we
cover the shaded region in figure~\ref{fig:fig1} for the three separate parts of odds greater
than one, threshold greater than half, and odds smaller than one. Finally, we recall
an old result on binomial probabilities which would imply that (assuming Cs\'oka's conjecture
holds and bold play is stable in the sense that we just explained) bold play is optimal
if $p\leq \frac 1n\leq t$ for all $n\in\mathbb N$.  

\section{Related problems and results}

According to Cs\'oka's conjecture, if the coin is fixed and the stakes vary, then
the maximum tail probability is attained by a (scaled) binomial.
If the stake is fixed and the coins vary, then Chebyshev
already showed that the maximum probability is attained by a binomial:

\begin{theorem}[Chebyshev, \cite{Che}]\label{thmH}
For a given $s$ and $l$, let $Z=\beta_1+\cdots+\beta_l$ be any sum of $l$ independent Bernoullis
such that $\E[Z]=s$.
Then $\P(Z\geq t)$ is maximized by Bernoullis for which the success probabilities
assume at most three different values, only one of which is distinct from $0$ and $1$.
In particular, the maximum $\P(Z\geq t)$ is a binomial tail probability.
\end{theorem}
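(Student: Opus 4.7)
My plan is to reduce to an integer threshold, establish existence of an optimum by compactness, and then apply a two-coordinate perturbation that preserves the mean $s$. Since $Z$ is integer-valued I may replace $t$ by $k=\lceil t\rceil$. The feasible set $\{(p_1,\ldots,p_l)\in[0,1]^l:\sum_m p_m=s\}$ is compact and $\P(Z\ge k)$ is a polynomial in $(p_1,\ldots,p_l)$, so a maximizer exists.

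The engine of the argument will be the following computation. Fixing indices $i\ne j$ and writing $T=\sum_{m\ne i,j}\beta_m$ with mass function $a_m=\P(T=m)$, conditioning on $T$ yields
\[
\P(Z\ge k)=\sum_{m\ge k}a_m+a_{k-1}\bigl(p_i+p_j-p_ip_j\bigr)+a_{k-2}\,p_ip_j.
\]
Subject to $p_i+p_j=c$ (held constant, so both the mean constraint and the law of $T$ are preserved), this expression is affine in $q=p_ip_j$ with slope $a_{k-2}-a_{k-1}$. As $(p_i,p_j)$ varies over the admissible segment, $q$ traces a closed interval whose maximum $c^2/4$ is attained only at $p_i=p_j=c/2$ and whose minimum is attained only when one of $p_i,p_j$ equals $0$ or $1$. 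Maximality of $\P(Z\ge k)$ therefore forces, for every pair $(i,j)$, the dichotomy: either $p_i=p_j$, or $\{p_i,p_j\}\cap\{0,1\}\neq\emptyset$.

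To conclude, I observe that this pairwise dichotomy forces all $p_m\in(0,1)$ at the optimum to share a single value $p$, since two distinct interior values would violate the alternative for that pair. Hence the optimum is supported on at most three values $\{0,1,p\}$, giving the first assertion. Writing $u$ for the multiplicity of $1$ and $r$ for that of $p$, we then have $Z=u+W$ with $W\sim\mathrm{Bin}(r,p)$, so $\P(Z\ge k)=\P(W\ge k-u)$ is indeed a binomial tail probability. The one delicate point I will have to handle is the degenerate case where $a_{k-2}=a_{k-1}$ for some pair, in which the perturbation is cost-free rather than strictly improving; to plug this loophole I would pass to a maximizer minimizing the number of distinct interior values, so that any cost-free equalization would strictly decrease that count---a contradiction unless the three-value structure already holds.
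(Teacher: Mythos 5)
The paper does not prove Theorem~\ref{thmH}; it is cited as a classical result of Chebyshev (essentially Hoeffding's 1956 theorem on the distribution of the number of successes). Your overall strategy --- conditioning on $T=\sum_{m\neq i,j}\beta_m$, observing that $\P(Z\geq k)$ is affine in $q=p_ip_j$ for fixed $p_i+p_j=c$, and forcing a pairwise dichotomy at a maximizer --- is exactly the standard approach and is correct in outline. In particular, your case analysis is sound: at a maximizer, if $p_i,p_j\in(0,1)$ are distinct then neither $a_{k-2}>a_{k-1}$ (else moving $q$ up to $c^2/4$ would improve) nor $a_{k-2}<a_{k-1}$ (else pushing $q$ down to the boundary would improve) can hold, so only the degenerate case $a_{k-2}=a_{k-1}$ remains, and there one may equalize cost-free.

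However, your patch for the degenerate case does not work as stated. Minimizing the number of distinct interior values is not a decreasing potential under equalization. A concrete counterexample: if the vector is $(0.3,0.3,0.7,0.7)$, equalizing the pair $(0.3,0.7)$ yields $(0.5,0.3,0.5,0.7)$, whose set of distinct interior values grows from $\{0.3,0.7\}$ to $\{0.3,0.5,0.7\}$. So ``cost-free equalization strictly decreases the count'' is false. The correct repair is to minimize a \emph{strictly convex} potential over the (compact) set of maximizers, e.g.\ $\sum_m p_m^2$. Then for any interior pair $p_i\neq p_j$ with $p_i+p_j=c$, equalizing to $p_i=p_j=c/2$ increases $q$ to $c^2/4$, leaves the objective unchanged in the degenerate case, but strictly decreases $p_i^2+p_j^2=c^2-2q$ --- a contradiction. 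Hence a maximizer minimizing $\sum p_m^2$ has all interior probabilities equal, and the three-value structure follows. With that one substitution your argument is complete and matches the classical proof.
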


Samuels considered a more general situation with fixed expectations and arbitrary
random variables.

\begin{conjecture}[Samuels, \cite{Sam}]\label{conj2}
Let $0\leq c_1\leq \cdots\leq c_l$ be such that $\sum_{i=1}^l c_i<1$. 
Consider $\sup \mathbf P(X_1+\cdots+X_l\geq 1)$
over all collections of $l$ independent 
non-negative random variables such that $\mathbf E[X_i]=c_i$. This supremum
is a maximum which is attained by $X_j=c_j$ for $j\leq k$ and $X_j=(1-b)\beta_j$ for
$j>k$, where $k$ is an integer, the $\beta_j$ are Bernoulli random variables, and
$b=\sum_{i=1}^kc_i$. In other words, the gambler accumulates $b$ from small
expectations before switching to bold play.
 \end{conjecture}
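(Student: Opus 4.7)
My plan is a three-stage reduction matching the conjectured extremizer. \textbf{Stage one, reduction to two-point laws.} I would first establish that the supremum is attained, via weak compactness of probability measures on $[0,\infty)$ with prescribed mean (after a truncation argument showing mass cannot escape to infinity without hurting the tail probability). Then, fixing all $X_j$ with $j \neq i$, the tail probability $\P(X_1+\cdots+X_l \geq 1)$ is affine in the law of $X_i$ over the convex set of measures on $[0,\infty)$ with mean $c_i$; its extreme points are either the point mass at $c_i$ or the two-point measures on $\{0,a_i\}$ with $a_i > c_i$ carrying mass $c_i/a_i$ at $a_i$. So we may assume $X_i = a_i \beta_i$ with $\beta_i$ Bernoulli of parameter $c_i/a_i$ and $a_i \geq c_i$.

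\textbf{Stage two, equalization of the bold atoms.} Call index $i$ \emph{deterministic} if $a_i = c_i$ and \emph{bold} otherwise, and let $b$ be the sum of the deterministic $c_i$'s. I would argue that, in an optimum, every bold atom equals the common value $1-b$. If some $a_i > 1-b$ then $\{\beta_i = 1\}$ already overshoots the threshold without help from the accumulated $b$, so shrinking $a_i$ down to $1-b$ strictly raises the Bernoulli parameter $c_i/a_i$ while preserving exactly which sets of bold successes push $S$ past $1$. The opposite case $a_i < 1-b$ is more subtle: the variable can no longer solo the target, and one has to compare the tail gains of two alternative moves, namely collapsing $X_i$ to the constant $c_i$ (joining the deterministic block) or pushing $a_i$ up to $1-b$ (joining the bold block).

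\textbf{Stage three, the monotone threshold.} Once all bolds share the atom $1-b$, the event $\{S \geq 1\}$ coincides with $\{$at least one bold succeeds$\}$, and the tail probability factors as $1 - \prod_{j \text{ bold}} \bigl(1 - c_j/(1-b)\bigr)$. I would then show the deterministic set is a down-set in the ordering $c_1 \leq \cdots \leq c_l$: if a larger $c_j$ were deterministic and a smaller $c_i$ bold, then swapping their roles changes $b$ in a controlled way, and a short calculation using the concavity of $\log(1-x)$ together with $c_i < c_j$ should yield strict improvement.

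The hard part is certainly the second sub-case of Stage two, where $a_i < 1-b$. There the tail probability depends jointly on the laws of all the bold variables through the distribution of the overshoot, and local perturbations do not decouple cleanly; this is precisely the obstacle that has kept the full Samuels conjecture open. I would expect progress to require either an inductive reduction in $l$ (conditioning on $X_l$ and invoking the conjecture for $l-1$ at a random threshold, together with monotonicity of the extremizer in its threshold), or a rearrangement-type inequality on the joint distribution of the $\beta_j$ that forces the common atom globally rather than via a sequence of local moves.
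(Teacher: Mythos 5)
The statement you were asked about is Conjecture~\ref{conj2} (Samuels' conjecture), which the paper presents as an \emph{open problem}, not as a theorem; the paper contains no proof of it and explicitly treats it as unresolved. There is therefore no argument in the paper for your proposal to be compared against, and any genuine proof would be a new contribution to the literature.

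Your sketch also has concrete gaps of its own. In Stage one, the extreme points of the set of probability measures on $[0,\infty)$ with mean $c_i$ are the measures of support size at most two, but these include two-point laws on $\{x,y\}$ with $0<x<c_i<y$; they are not restricted to $\delta_{c_i}$ and measures with an atom at $0$. Reducing to the Bernoulli-plus-constant form that Samuels predicts already requires an argument beyond the Krein--Milman step, and you have not supplied one. More fundamentally, in Stage two you yourself flag that the sub-case $a_i<1-b$ is ``precisely the obstacle that has kept the full Samuels conjecture open'' and leave it unresolved, so the proposal is an attack plan rather than a proof. Stages one through three are a reasonable description of how one might hope to proceed, and Stage three's factorization $1-\prod_{j\text{ bold}}(1-c_j/(1-b))$ is correct once the reduction has been achieved, but the reduction itself is exactly the open content of the conjecture.
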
 

If all $c_i$ are equal, then the $\beta_j$ are identically distributed,
and the conjecture predicts that the maximum probability is attained by a binomial.
 
If one assumes that the Samuels conjecture holds, then one still needs
to determine the optimal~$k$. 
If $c_1=\ldots=c_l=\frac{1}{l+1}$ then the optimal~$k$ is equal to zero	~\cite{A}. 
This implies
that another well-known conjecture is a consequence of Samuels'
conjecture, see also~\cite{Paulin}.

\begin{conjecture}[Feige, \cite{Feige}]\label{conj3}
For all collections of $l$ independent 
non-negative random variables such that $\mathbf E[X_i]\leq 1$ 
it is true that
\[\mathbf P(X_1+\cdots+X_l<l+1)\geq \frac 1e.\]
\end{conjecture}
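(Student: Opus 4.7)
The plan is to derive Feige's conjecture from Samuels' conjecture (Conjecture~\ref{conj2}) together with the cited special case \cite{A} that $k=0$ is the optimal Samuels-index when $c_1=\cdots=c_l=\frac{1}{l+1}$.

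The first step is a mean-equalising reduction. Given non-negative independent $X_1,\dots,X_l$ with $\E[X_i]\leq 1$, I would set $X_i'=X_i+(1-\E[X_i])$. Each $X_i'$ is still non-negative, independent of the others, has expectation exactly $1$, and dominates $X_i$ pointwise. Hence $\{\sum X_i'<l+1\}\subseteq\{\sum X_i<l+1\}$, so it suffices to lower-bound $\P(\sum X_i'<l+1)$ under the stronger assumption $\E[X_i']=1$.

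Next I rescale: set $Y_i=X_i'/(l+1)$ so that $\E[Y_i]=\frac{1}{l+1}$, and the goal becomes $\sup\P(Y_1+\cdots+Y_l\geq 1)\leq 1-\frac{1}{e}$. Since $\sum_{i=1}^l\frac{1}{l+1}=\frac{l}{l+1}<1$, Samuels' conjecture applies with $c_1=\cdots=c_l=\frac{1}{l+1}$, and by \cite{A} the optimal $k$ is~$0$. Hence the extremal tuple has $Y_j=\beta_j$ with $\beta_j$ independent Bernoullis of parameter $\frac{1}{l+1}$, and
\[
\P\!\left(\sum_{j=1}^l\beta_j\geq 1\right)=1-\left(1-\frac{1}{l+1}\right)^l=1-\frac{1}{(1+1/l)^l}\leq 1-\frac{1}{e},
\]
using the elementary inequality $(1+1/l)^l\leq e$. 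Undoing the rescaling yields $\P(\sum X_i<l+1)\geq\frac{1}{e}$.

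There is no genuine obstacle once Samuels' conjecture is granted: the substance of the argument lives entirely there, and the remaining manipulations---the mean-equalising shift, the homogeneous rescaling, and the bound $(1+1/l)^l\leq e$---are routine. The one point worth flagging is that one must invoke the full strength of Samuels' conjecture (which both identifies the extremal configuration and asserts that the supremum is attained), rather than Chebyshev's Theorem~\ref{thmH}: Chebyshev fixes the number of summands in advance, whereas Feige's inequality is a single uniform bound over all $l$, which is precisely what the $k=0$ conclusion of \cite{A} for every $l$ supplies.
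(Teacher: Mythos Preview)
Your derivation is correct and is precisely the implication the paper asserts (without details) just before stating Conjecture~\ref{conj3}: Samuels' conjecture together with the result from~\cite{A} that $k=0$ is optimal when $c_1=\cdots=c_l=\tfrac{1}{l+1}$ yields Feige's bound via the elementary inequality $(1+1/l)^l\le e$. Your mean-equalising shift to reduce from $\E[X_i]\le 1$ to $\E[X_i]=1$ is a clean way to handle the inequality in the hypothesis, and the rest matches the paper's intended route exactly.
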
 

As a step towards solving this conjecture, Feige proved the remarkable
theorem that there exists a $\delta>0$ such that
$\mathbf P(X_1+\cdots+X_l<l+1)\geq \delta$. 
His original value of $\delta=\frac 1{13}$
has been gradually improved. The current best result is $0.1798$ by Guo et al~\cite{Guo}.

\section{Properties of $\pi(p,t)$}

The function $\pi(p,t)$ is defined on a region bounded by a rectangular
triangle. It is easy to compute its value on the legs of the triangle:
$\pi(0,t)=0$ and $\pi(p,1)=p$. It is much harder to compute the
value on the hypothenuse. 

\begin{proposition}\label{pro1}
$\frac 12<\pi(p,p)<1$ if $0<p<1$.
\end{proposition}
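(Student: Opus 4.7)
My plan is to prove the two inequalities separately.

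For the lower bound $\pi(p,p)>\tfrac12$ I would exhibit bold play. Set $k:=\lfloor 1/p\rfloor$ and take $\gamma$ with $c_1=\cdots=c_k=1/k$ and $c_i=0$ for $i>k$. Then $S_\gamma=B/k$ with $B\sim\mathrm{Bin}(k,p)$, and because $0<kp\le 1$ the event $\{S_\gamma\ge p\}$ coincides with $\{B\ge 1\}$, giving
\[
\P(S_\gamma\ge p)\;=\;1-(1-p)^k.
\]
It then suffices to check $(1-p)^{\lfloor 1/p\rfloor}<\tfrac12$ for every $p\in(0,1)$. In the strip $p\in(\tfrac1{k+1},\tfrac1k]$ one has $1-p<\tfrac{k}{k+1}$, so $(1-p)^k<(\tfrac{k}{k+1})^k$, and the elementary inequality $(\tfrac{k}{k+1})^k\le\tfrac12$ (strict for $k\ge 2$; the sequence decreases monotonically to $1/e$) finishes all strips with $k\ge 2$. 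The remaining case $k=1$, i.e.\ $p>\tfrac12$, yields $1-p<\tfrac12$ directly.

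For the upper bound $\pi(p,p)<1$ I would invoke the groundwork result that the supremum in~(\ref{equ1}) is attained. Let $\gamma^\ast$ realize it. Then $\E[S_{\gamma^\ast}]=p$ and $\mathrm{Var}(S_{\gamma^\ast})=p(1-p)\sum_i(c_i^\ast)^2>0$, so $S_{\gamma^\ast}$ is non-degenerate. A non-degenerate random variable with mean $p$ cannot satisfy $S_{\gamma^\ast}\ge p$ almost surely, for otherwise the identity $\E[S_{\gamma^\ast}]=p$ would force $S_{\gamma^\ast}=p$ a.s., contradicting the positive variance. Hence $\P(S_{\gamma^\ast}<p)>0$ and therefore $\pi(p,p)<1$.

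The main obstacle, in my view, is the upper bound if attainment is not yet available at this point in the paper. In that case one needs a bound uniform in $\gamma$, and the natural approach would be a dichotomy: when $\sum c_i^2$ is small, $S_\gamma$ is approximately Gaussian with mean $p$ and a Berry--Esseen estimate pins $\P(S_\gamma\ge p)$ near $\tfrac12$; when $\sum c_i^2$ is bounded below, a case split on whether $c_1>1-p$ (where conditioning on $\beta_1=0$ gives $S_\gamma\le 1-c_1<p$) or $c_1\le 1-p$ (where a Chebyshev-type estimate supplies mass below the mean) yields a uniform strict bound. The lower bound, by contrast, is self-contained and reduces to the explicit elementary inequality displayed above.
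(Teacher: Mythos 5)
Your lower bound is essentially the paper's: bold play with $k=\lfloor 1/p\rfloor$ gives $1-(1-p)^k$, and the elementary inequality $\left(1-\tfrac 1{k+1}\right)^k\le\tfrac 12$ (with equality only at $k=1$) finishes. The upper bound, however, has a genuine gap. You invoke attainment of the supremum, but the paper's Theorem~\ref{thm0} establishes attainment only for $p<t$, and its proof uses the hypothesis $p<t$ in an essential way (to rule out the all-zero coordinatewise limit $\gamma$), so there is no maximizer $\gamma^\ast$ available on the diagonal $p=t$. (Theorem~\ref{thm0} also comes \emph{after} Proposition~\ref{pro1} in the paper, so it could not simply be cited here even if it applied.) Your contingency sketch therefore has to carry the whole weight, and as written it does not close the gap: the Berry--Esseen ratio $\sum c_i^3/\bigl(\sum c_i^2\bigr)^{3/2}\le c_1/\sqrt{\sum c_i^2}$ can be $\Theta(1)$ (e.g.\ when one coefficient dominates), so a Berry--Esseen bound by itself does not pin $\P(S_\gamma\ge p)$ near $\tfrac 12$ uniformly in $\gamma$; and in the complementary regime you need a \emph{lower} bound on the mass below the mean, which ordinary Chebyshev does not supply. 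What the paper does is bypass the dichotomy with a single uniform estimate: it applies the Paley--Zygmund-type inequality $\P(X<0)\ge(2\sqrt 3-3)\,\E[X^2]^2/\E[X^4]$ for mean-zero $X$ (He, Luo, Nie, Zhang) to $X=S_\gamma-p$. Since $\E[X^2]=p(1-p)\sum c_i^2$ while $\E[X^4]\le\max\!\left(3,\tfrac 1{p(1-p)}-3\right)p^2(1-p)^2\bigl(\sum c_i^2\bigr)^2$, the $\sum c_i^2$ factors cancel and one gets $\pi(p,p)\le 1-(2\sqrt 3-3)/\max\!\left(3,\tfrac 1{p(1-p)}-3\right)<1$ uniformly in $\gamma$, with no case split; the paper then sharpens the bound for small $p$ via Feige's theorem. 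Your observation that $c_1>1-p$ forces $\P(S_\gamma<p)\ge 1-p$ is correct but covers only one regime; the fourth-moment inequality is what makes the estimate uniform.
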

\begin{proof}
We follow the proof of \cite[Lemma 1]{ABPY}.
The following Paley-Zygmund type inequality
for random variables of zero mean was proved in \cite[lemma 2.2]{HLNZ} and extended
in \cite{HZZ}:
\[
\P(X<0)\geq \left(2\sqrt 3 - 3\right)\frac{\E[X^2]^2}{\E[X^4]}.
\]
Applying this to $S_\gamma-p$ we have
\[
\P(S_\gamma<p)\geq \left(2\sqrt 3 - 3\right)\frac{\E[(S_\gamma-p)^2]}{\E[(S_\gamma-p)^4]}.
\]
The second moment of $S_\gamma-p$ is equal to $p(1-p)\sum c_i^2$ and the fourth
moment is equal to 
\begin{eqnarray*}
3p^2(1-p)^2\sum_{i\not=j} c_i^2c_j^2+(p(1-p)^4+p^4(1-p))\sum c_i^4
\end{eqnarray*}
This can be bounded by
\begin{eqnarray*}
\max\left(3, \frac 1{p(1-p)}-3\right)p^2(1-p)^2\left(\sum c_i^2\right)^2
\end{eqnarray*}
The Paley-Zygmund type inequality produces a lower bound on 
$\P(S_\gamma<p)$. Its complementary probability
$\pi(p,p)$ is bounded by:
\[
\pi(p,p)\leq 1-\frac {2\sqrt 3 - 3}{\max\left(3, \frac 1{p(1-p)}-3\right)}.
\]

It is possible to improve on this bound for small $p$ by using 
Feige's theorem. 
We write $S_\gamma=c_1\beta_1+S$. Then 
$\mathbf P(S_\gamma<p)\geq \mathbf P(\beta_1=0)\mathbf P(S<p)=(1-p)\mathbf P(S<p)$.
Note that $\mathbf E[S]=p(1-c_1)$ and we write $\mathbf P(S<p)=\mathbf P(S<\mathbf E[S]+pc_1)$.
We can approximate this probability by a truncated sum
$\mathbf P(S_n<\mathbf E[S]+pc_1)$, where $S_n=c_2\beta_2+\cdots+c_n\beta_n$ is a
sum of independent random variables of expectation $\leq pc_1$. By dividing
by $pc_1$ and applying the bound $0.1798$ of \cite{Guo} we find
\[
\pi(p,p)\leq 0.8202+0.1798p.
\]
 
We have two upper bounds. The first is more restrictive for large $p$ and the second is more
restrictive for small $p$.
The lower bound follows from bold play. Let $k\in\mathbb N$ be such that $\frac{1}{k+1}<p\leq \frac 1k$.
If $\bar S_k$ is the average of $k\geq 1$ Bernoullis, then 
$\P(\bar S_k\geq p)=\P(\bar S_k\geq \frac 1k)=1-(1-p)^k> 1-(1-\frac 1{k+1})^k$.
This is minimal and equal to $\frac 12$ if $k=1$.
\end{proof}

\begin{figure}[ht!]
	\centering
		\includegraphics[width=0.60\textwidth]{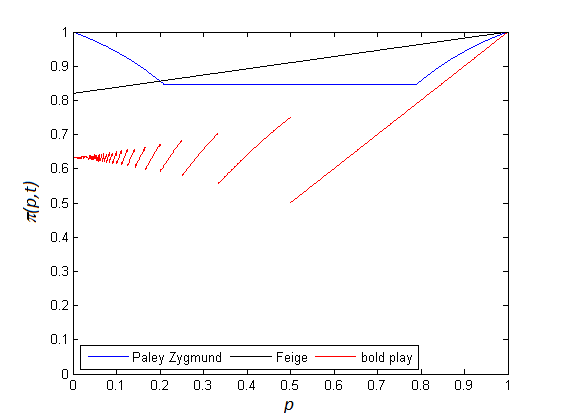}
	\caption{The upper bounds by Paley-Zygmund's inequality and Feige's theorem, and the lower bound by bold play, as in the proof of Proposition~\ref{pro1}.
	If Feige's conjecture holds, then the upper bound would
	meet the lower bound at $p=0$ in this figure.
		In corollary~\ref{cor1} we find $\pi(\frac 12,\frac 12)=\frac 34$, which is on the graph for the lower
	bound. If conjecture~\ref{conj1} is correct, then theorem~\ref{CBthm} implies that bold play is
	optimal for $p=t=\frac 1n$ for all $n$. These are the tops of the zigzag.}
	\label{fig:fig2}
\end{figure}

We say that a sequence $\gamma$ is \emph{finite} if $c_i=0$ for all but finitely
many~$i$, and infinite otherwise.

\begin{proposition}\label{pro2}
$\pi(p,t)=\sup \left\{\P\left(S_\gamma\geq t\right)\mid \gamma\ \mathrm{is\ finite}\right\}$
\end{proposition}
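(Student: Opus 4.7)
The plan is to approximate any infinite sequence $\gamma$ by a sequence of finite convex combinations. Write $R_n=\sum_{i>n}c_i$ and $S_n=\sum_{i\le n}c_i\beta_i$, and let $\gamma_n$ be the finite sequence obtained by keeping $c_1,\dots,c_n$ and appending one further entry of size $R_n$, reordered weakly decreasingly. Then $\gamma_n$ is a valid convex combination with at most $n+1$ non-zero entries, and conditioning on the independent Bernoulli attached to the extra coefficient yields
\[
\P(S_{\gamma_n}\ge t)\;=\;(1-p)\,\P(S_n\ge t)\;+\;p\,\P(S_n\ge t-R_n).
\]

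I would then pass to the limit using $S_n\uparrow S_\gamma$ almost surely (each summand is non-negative) and the bound $0\le T_n:=S_\gamma-S_n\le R_n\to 0$. A case analysis on the sign of $S_\gamma-t$ shows that the events $\{S_n\ge t-R_n\}$ converge set-theoretically to $\{S_\gamma\ge t\}$ --- crucially, on $\{S_\gamma=t\}$ one automatically has $S_n=t-T_n\ge t-R_n$ --- while the events $\{S_n\ge t\}$ increase to $\{S_\gamma>t\}$ up to a null set, because $S_n=t$ combined with $S_\gamma=t$ forces $\beta_i=0$ for all $i>n$, an event of probability zero for infinite $\gamma$. Substituting:
\[
\lim_{n\to\infty}\P(S_{\gamma_n}\ge t)\;=\;(1-p)\,\P(S_\gamma>t)+p\,\P(S_\gamma\ge t)\;=\;\P(S_\gamma\ge t)-(1-p)\,\P(S_\gamma=t).
\]

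The remaining task is to close the gap of size $(1-p)\,\P(S_\gamma=t)$, which amounts to proving that $S_\gamma$ is atom-free whenever $\gamma$ is infinite and $p\in(0,1)$. This is the step I expect to be the main obstacle, since it requires ruling out that countably many independent Bernoullis can conspire to place positive mass on a single value. A natural route is via the L\'evy concentration function $Q(X,0):=\sup_s\P(X=s)$: the inequality $Q(X+Y,0)\le Q(Y,0)$ for independent $X,Y$ becomes a strict contraction as soon as one adds a non-degenerate Bernoulli $c\,\beta$ whose weight $c>0$ is not resonant with the atomic structure of $X$, and an effective iteration of this contraction over the infinitely many coefficients of $\gamma$ should force $Q(S_\gamma,0)=0$. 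The boundary cases $p\in\{0,1\}$ are immediate, since then $S_\gamma$ is deterministic and both sides of the proposition coincide. Once atom-freeness is established, $\lim_n\P(S_{\gamma_n}\ge t)=\P(S_\gamma\ge t)$ for every infinite $\gamma$, giving the non-trivial inequality $\pi(p,t)\le\sup\{\P(S_{\gamma'}\ge t)\mid\gamma'\text{ finite}\}$; the reverse inequality is trivial.
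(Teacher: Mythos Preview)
Your reduction is sound: appending the tail mass $R_n$ as a single extra coefficient yields a legitimate finite convex combination, and your limit computation
\[
\lim_{n\to\infty}\P(S_{\gamma_n}\ge t)=\P(S_\gamma\ge t)-(1-p)\,\P(S_\gamma=t)
\]
is correct. So the proof is complete modulo the claim that $S_\gamma$ is non-atomic whenever $\gamma$ is infinite and $0<p<1$, and you rightly flag this as the crux.

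The concentration-function sketch, however, does not work as stated. The inequality $Q(X+c\beta,0)\le Q(X,0)$ is in general not a strict contraction: if $X$ is uniform on $\{0,c\}$ and $\beta$ is Bernoulli$(\tfrac12)$, then $X+c\beta$ has atoms $\tfrac14,\tfrac12,\tfrac14$ and $Q$ is unchanged at $\tfrac12$. More generally, an arithmetic-progression structure in the atoms of $X$ can absorb arbitrarily many further Bernoulli summands with the same spacing without shrinking the maximal atom, so ``iterating the contraction over the infinitely many coefficients'' is not automatically effective. You would need a genuinely new idea to make this route rigorous.

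The paper's argument for non-atomicity is short and sidesteps the difficulty. From the summable sequence $(c_i)$ one extracts an infinite lacunary subsequence $(c_{i_j})_j$ with $c_{i_j}>2\sum_{k>j}c_{i_k}$; this is possible because the tails of a convergent series can be made arbitrarily small. For $S_I=\sum_j c_{i_j}\beta_{i_j}$, the lacunarity guarantees that the value of $S_I$ uniquely determines every $\beta_{i_j}$ (greedy / binary-expansion reasoning). Hence each real number is hit by at most one infinite $0$--$1$ sequence, and any fixed such sequence has probability $\prod_j p^{\beta_{i_j}}(1-p)^{1-\beta_{i_j}}=0$. Thus $S_I$ is non-atomic, and since $S_\gamma=S_I+S_J$ with $S_J$ independent, so is $S_\gamma$.

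A minor remark: once non-atomicity is in hand, your appended-coefficient construction is more elaborate than necessary. The paper simply truncates, uses monotone convergence for the open event $\{S_{\gamma,n}>t\}\uparrow\{S_\gamma>t\}$, and (implicitly) rescales the truncated coefficients to sum to~$1$, which can only increase the tail probability. Either construction works, but the simpler one already suffices.
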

\begin{proof}
According to Jessen and Wintner's law of pure type~\cite[Theorem 3.5]{B}, 
either $\mathbf P(S_\gamma=s)=0$ for each $s\in\mathbb R$ or there exists
a countable set $\mathcal C$ such that $\mathbf P(S_\gamma\in\mathcal C)=1$.
In other words, the random variable $S_\gamma$ is either non-atomic or discrete. 
If $X$ and $Y$ are independent, and if $X$ is non-atomic, 
then the convolution formula implies that $X+Y$
is non-atomic.

Suppose that $\gamma$ is infinite. We prove that $S_\gamma$ is non-atomic.
Let $(c_{i_j})$ be a subsequence such that $c_{i_j}>2\sum_{k=j+1}^\infty c_{i_k}$.
Let $I$ be the set of all $i_j$ and let $J$ be its complement. 
Then both $S_I=\sum_I c_i\beta_i$ and $S_J=\sum_J c_i\beta_i$ 
are either discrete or non-atomic. 
By our choice of $c_{i_j}$, $S_I$ has the property that its value determines
the values of all $\beta_i$ for $i\in I$. This implies that $S_I$ is non-atomic.
Therefore, $S_\gamma=S_I+S_J$ is non-atomic. In particular
$\P(S_\gamma\geq t)=\P(S_\gamma>t)$.

Denote a truncated sum by $S_{\gamma,n}=\sum_{i\leq n}c_i\beta_i$. 
By monotonic convergence $\P\left(S_\gamma>t\right)=\lim_{n\to\infty}\P(S_{\gamma,n}>t)$.
Therefore, for any infinite $\gamma$, $\P(S_\gamma\geq t)$  
can be approximated 
by tail probabilities of finite convex combinations. 
\end{proof}

Cs\'oka conjectures that the tail probability is maximized by an average. 
This would imply that the sup in proposition~\ref{pro2} is a max. We are 
unable to prove this, but we can prove that the sup in equation~\ref{equ1} is a max.

\begin{theorem}\label{thm0}
If $p<t$ then $\pi(p,t)=\P(S_{\gamma}\geq t)$ for some $\gamma$.
Furthermore, $\pi(p,t)$ is left-continuous in $t$.
\end{theorem}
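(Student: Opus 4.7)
The plan is to exhibit a maximizer by a compactness argument in the product topology, handling mass-loss by renormalization, and to deduce left-continuity by applying the same argument with a varying threshold. Set $K = \{\gamma = (c_i) : c_1\geq c_2\geq\cdots\geq 0,\ \sum c_i\leq 1\}$; this is compact in the product topology by Tychonoff.

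The first step is to bound the largest coefficient from below. For a maximizing sequence $\gamma^{(n)}$, the ordering $c_i^{(n)}\leq c_1^{(n)}$ gives $\sum_i (c_i^{(n)})^2\leq c_1^{(n)}$, so Hoeffding's inequality yields $\P(S_{\gamma^{(n)}}\geq t)\leq \exp\bigl(-2(t-p)^2/c_1^{(n)}\bigr)$. Since $\pi(p,t)\geq p>0$ (realized by $c_1=1$), this forces $c_1^{(n)}\geq \delta_0>0$ for $n$ large. A diagonal extraction then produces a subsequence converging pointwise to some $\gamma^*\in K$ with $C^*:=\sum c_i^*\in[\delta_0,1]$ by Fatou's lemma.

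The key step is controlling the weak limit of $S_{\gamma^{(n)}}$ under possible mass-loss. Split $S_{\gamma^{(n)}}=A_n^{(N)}+B_n^{(N)}$ at the $N$-th coordinate. For fixed $N$, pointwise convergence gives $A_n^{(N)}\to\sum_{i\leq N}c_i^*\beta_i$ in $L^1$, while $B_n^{(N)}$ has mean $\mu_n\to p(1-C_N^*)$ and variance at most $p(1-p)\,c_{N+1}^{(n)}$, which vanishes as $N\to\infty$ because $c_{N+1}^*\to 0$. Using a Chebyshev bound on $B_n^{(N)}$ together with the iterated limits $n\to\infty$, $N\to\infty$, and $\eta\downarrow 0$ for the Chebyshev slack, one obtains
\[
\limsup_{n\to\infty}\P(S_{\gamma^{(n)}}\geq t)\leq \P\bigl(S_{\gamma^*}\geq t-p(1-C^*)\bigr).
\]
To produce a valid maximizer, renormalize: set $\tilde\gamma:=\gamma^*/C^*$, a non-increasing sequence summing to $1$. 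Then $\P(S_{\tilde\gamma}\geq t)=\P(S_{\gamma^*}\geq C^*t)$, and $C^*t\leq t-p(1-C^*)$ reduces to $(t-p)(C^*-1)\leq 0$, which holds since $t\geq p$ and $C^*\leq 1$. Therefore $\P(S_{\tilde\gamma}\geq t)\geq \pi(p,t)$, and the reverse inequality is by definition.

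For left-continuity in $t$, take $t_n\uparrow t$ and let $\gamma^{(n)}$ maximize $(p,t_n)$, which exists by the first part for $n$ large. The same compactness-and-renormalization argument applies with $t_n$ in place of $t$ (the Hoeffding bound on $c_1^{(n)}$ survives because $t_n-p\geq (t-p)/2$ eventually); one passes from $t_n$ to the limit by observing $\P(S_{\gamma^{(n)}}\geq t_n)\leq \P(S_{\gamma^{(n)}}\geq s)$ for any fixed $s<t$ and then letting $s\uparrow t$. The resulting $\tilde\gamma^*$ satisfies $\P(S_{\tilde\gamma^*}\geq t)\geq \limsup_n \pi(p,t_n)$, while $\P(S_{\tilde\gamma^*}\geq t)\leq \pi(p,t)\leq \pi(p,t_n)$ by monotonicity, forcing $\pi(p,t_n)\to\pi(p,t)$. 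The principal technical obstacle throughout is the careful interchange of the limits $n\to\infty$ and $N\to\infty$ in the mass-loss case: naive pointwise convergence fails because mass can leak into the diffuse tail $B_n^{(N)}$, and it is precisely the Chebyshev control that rescues the argument.
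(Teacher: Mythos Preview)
Your argument is correct and follows essentially the same route as the paper: extract a coordinatewise limit of a maximizing sequence, control the tail beyond a finite cutoff via Chebyshev to obtain $\limsup_n \P(S_{\gamma^{(n)}}\geq t)\leq \P(S_{\gamma^*}\geq t-p(1-C^*))$, and then renormalize using the inequality $C^*t\leq t-p(1-C^*)$. The only cosmetic differences are that you use Hoeffding to rule out the all-zero limit where the paper uses the variance tending to zero, and that the paper handles existence and left-continuity in a single pass by taking $t_n\uparrow t$ from the start, whereas you separate the two.
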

\begin{proof}
We write $\pi(p,t^-)=\lim_{s\uparrow t}\pi(p,s)$.
Since $\pi(p,t)$ is decreasing in $t$, it suffices to show that there
exists an $S_\gamma$ such that $\P(S_\gamma\geq t)\geq \pi(p,t^-)$.
Let $\gamma_n=(c_{n,i})_i$ be such that $\P(S_{\gamma_n}\geq t_n)$ converges to 
$\pi(p,t^-)$ for an increasing sequence $t_n\uparrow t$.  
By a standard diagonal argument we can assume that $(c_{n,i})_n$ is convergent
for all~$i$. Let $c_i=\lim_{n\to\infty} c_{n,i}$ and let $\gamma=(c_i)_i$.  
Then $\gamma$ is a non-increasing sequence which adds up to $\sum c_i=1-c\leq 1$.
Observe that $\gamma$ cannot be the all zero sequence, since this would imply that
$c_{n,1}\to 0$ and $\mathrm{Var}(S_{\gamma_n})=p(1-p)\sum c_{n,i}^2\leq p(1-p)c_{n,1}\to 0$,
so $S_{\gamma_n}$ converges to $p$ in distribution. Since we limit ourselves to $p<t$,
this means that $\P(S_{\gamma_n}\geq t)\to 0$ which is nonsense.
Therefore, $1-c>0$.

We first prove that $\pi(p,t^-)\leq \P(S_\gamma\geq t-cp)$.
Fix an arbitrary $\epsilon>0$. Let $i_0$ be such that $\sum_{j\geq i_0}c_{j}<\frac \epsilon 4$
and $c_{i_0}<\epsilon^4$.
Let $n_0$ be such that $\sum_{j\leq i_0}|c_{n,j}-c_j|<\frac\epsilon 4$ and $c_{n,i_0}<\epsilon^4$
for all $n\geq n_0$.
Now 
\[
\textstyle
\{S_{\gamma_n}\geq t_n\}\subset
\left\{\sum_{j\leq i_0}c_{n,j}\beta_j\geq t_n-cp-\epsilon \right\}
\bigcup
\left\{\sum_{j\geq i_0}c_{n,j}\beta_j\geq cp+\epsilon \right\}
\]
so by our assumptions
\[
\textstyle
\begin{array}{rcl}
\{S_{\gamma_n}\geq t_n\}&\subset&
\left\{\sum_{j\leq i_0}c_{j}\beta_j\geq t_n-cp-2\epsilon\right\}
\bigcup
\left\{\sum_{j\geq i_0}c_{n,j}\beta_j\geq cp+\epsilon\right\}\\
\\
&\subset&
\left\{S_\gamma\geq t_n-cp-2\epsilon\right\}
\bigcup
\left\{T_n\geq cp+\epsilon\right\}
\end{array}
\]
where we write $T_n=\sum_{j\geq i_0}c_{n,j}\beta_j$.
Observe that 
\[
\textstyle
\E[T_n]=\E[S_{\gamma_n}]-p\sum_{j<i_0}c_{n,j}<p-p\left(\sum_{j<i_0}c_j-\frac \epsilon 4\right)<p-p\left(1-c-\frac\epsilon 2\right)
<pc+\frac\epsilon 2\] 
and
\[\mathrm{Var}(T_n)=p(1-p)\sum_{j\geq i_0}c_{n,j}^2\leq c_{n,i_0}\sum_{j\geq i_0}c_{n,j}<\epsilon^4.\]
By Chebyshev's inequality, we conclude that
$\P\left(T_n\geq cp+\epsilon\right)<\epsilon$ for sufficiently small $\epsilon$.
It follows that
\[
\P(S_{\gamma_n}\geq t_n)\leq \P\left(S_\gamma\geq t_n-cp-2\epsilon\right)+\epsilon.
\] 
By taking limits $n\to\infty$ and $\epsilon\to 0$ we conclude that
\[
\pi(p,t^-)\leq\P\left(S_\gamma\geq t-cp\right).
\] 
Let $\bar\gamma=\frac 1{1-c}\gamma$. Then $S_{\bar\gamma}=\frac 1{1-c}S_\gamma$ is a convex combination
such that
\[\P\left(S_{\bar\gamma}\geq t\right)=
\P\left(S_\gamma\geq (1-c)t\right)\geq \P\left(S_\gamma\geq t-cp\right)\geq\pi(p,t^-).\]
Therefore, $\pi(p,t)=\P(S_{\bar\gamma}\geq t)$ and these inequalities are equalities. 
\end{proof}

We now more or less repeat this proof
to show that $\pi(p,t)$ is continuous in $p$.
Since we need to vary $p$,
we write $\beta^p$ for a Bernoulli with succes probability $p$
and $S_\gamma^p=\sum c_i\beta_i^p$.

\begin{theorem}\label{thmp}
$\pi(p,t)$ is continuous in $p$.
\end{theorem}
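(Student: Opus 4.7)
The plan is to establish, via a monotone coupling, the one-sided estimate
\[
\pi(p, t) \;\leq\; \pi(p', t) \;\leq\; \pi(p, t-\epsilon) + \frac{p'-p}{\epsilon}
\qquad (0 \leq p < p' \leq 1,\ \epsilon > 0),
\]
and then to bootstrap it into continuity in $p$. To produce the estimate, take $U_1, U_2, \ldots$ iid uniform on $[0,1]$ and set $\beta_i^p = \mathbf 1_{\{U_i \leq p\}}$, so that $\beta_i^p \leq \beta_i^{p'}$ almost surely whenever $p \leq p'$. The difference $D_\gamma := S_\gamma^{p'} - S_\gamma^p$ is nonnegative with mean $p'-p$, so Markov gives $\P(D_\gamma \geq \epsilon) \leq (p'-p)/\epsilon$ uniformly in $\gamma$. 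Combining this with the inclusion $\{S_\gamma^{p'} \geq t\} \subseteq \{S_\gamma^p \geq t-\epsilon\} \cup \{D_\gamma \geq \epsilon\}$ and taking the supremum over $\gamma$ produces the right inequality; the left inequality is monotonicity, also from the coupling.

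Right-continuity at $p$ drops out directly: let $p' \downarrow p$ to get $\limsup \pi(p', t) \leq \pi(p, t-\epsilon)$, then $\epsilon \downarrow 0$ together with left-continuity of $\pi(p, \cdot)$ from Theorem~\ref{thm0} gives $\limsup \leq \pi(p, t)$, and monotonicity furnishes the matching liminf. Left-continuity at $p'$ is more delicate because the $t-\epsilon$ on the right cannot be absorbed by a one-sided limit. The strategy there is to fix a near-optimizer: for any $\delta > 0$, pick $\gamma$ with $g_\gamma(p') \geq \pi(p', t) - \delta$, where $g_\gamma(p) := \P(S_\gamma^p \geq t)$. It then suffices to show that $g_\gamma$ is continuous at $p'$, because
\[
\liminf_{p \uparrow p'} \pi(p, t) \;\geq\; \lim_{p \uparrow p'} g_\gamma(p) \;=\; g_\gamma(p') \;\geq\; \pi(p', t) - \delta,
\]
and $\delta \downarrow 0$ closes the gap (monotonicity in $p$ supplies the reverse inequality).

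To obtain continuity of $g_\gamma$ I would split on whether $\gamma$ is finite or infinite. When $\gamma$ is finite, $g_\gamma$ is a polynomial in $p$, hence continuous. When $\gamma$ is infinite, Proposition~\ref{pro2} makes $S_\gamma^p$ non-atomic for every $p \in (0, 1)$, so $g_\gamma(p) = 1 - F^p(t)$ where $F^p$ is continuous; replaying the coupling with the complementary inclusion $\{S_\gamma^p \leq t\} \subseteq \{S_\gamma^{p'} \leq t + \epsilon\} \cup \{D_\gamma > \epsilon\}$ yields $F^p(t) \leq F^{p'}(t + \epsilon) + (p'-p)/\epsilon$, while $F^{p'}(t) \leq F^p(t)$ is immediate from the coupling. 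Letting $p \to p'$ and then $\epsilon \downarrow 0$ (using right-continuity of $F^{p'}$) gives $F^p(t) \to F^{p'}(t)$. The main obstacle throughout is the one-sidedness of the Markov estimate, which inserts a threshold shift $t-\epsilon$ on the right-hand side; this asymmetry is what forces the detour through a fixed near-optimizer and the non-atomicity provided by Proposition~\ref{pro2}.
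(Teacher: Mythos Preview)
Your argument is correct, and for right-continuity it is genuinely different from---and cleaner than---the paper's. The paper handles right-continuity by a compactness argument: it takes $p_n\downarrow p$ together with near-optimizing $\gamma_n$, extracts a coordinatewise limit $\gamma$ by diagonalization, and then controls the tail of $S_{\gamma_n}^{p_n}$ via Chebyshev (essentially replaying the proof of Theorem~\ref{thm0}). You instead exploit the monotone coupling $\beta_i^p=\mathbf 1_{\{U_i\le p\}}$ to get the uniform bound $\pi(p',t)\le\pi(p,t-\epsilon)+(p'-p)/\epsilon$ directly from Markov, and then invoke the already-established left-continuity in $t$ from Theorem~\ref{thm0}; this avoids compactness entirely and makes the dependence on Theorem~\ref{thm0} explicit. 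For left-continuity both you and the paper do the same thing: fix a near-optimizer $\gamma$ at the target point and use continuity of $p\mapsto\P(S_\gamma^p\ge t)$. One simplification you can make: by Proposition~\ref{pro2} you may always choose the near-optimizer $\gamma$ to be \emph{finite}, so $g_\gamma$ is a polynomial in $p$ and the infinite-$\gamma$ case (with its separate appeal to non-atomicity) is never needed.
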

\begin{proof}
For any $\epsilon>0$ 
choose a finite $\gamma$ such that 
$\P(S_\gamma^p\geq t)\geq \pi(p,t) -\epsilon$.
If $p_n$ converges to $p$ then $\beta^{p_n}$ converges to
$\beta^p$ in probability. Since $\gamma$ is finite 
\[
\limsup_{n\to\infty}\pi(p_n,t)\geq
\lim_{n\to\infty} \P(S_\gamma^{p_n}\geq t)=\P(S_\gamma^p\geq t)\geq \pi(p,t)-\epsilon.
\]
It follows that
$\limsup_{n\to\infty} \pi(p_n,t)\geq \pi(p,t)$ for any sequence $p_n\to p$.
Since $\pi(p,t)$ is increasing in~$p$, it follows that $\pi(p,t)$ is left-continuous
in~$p$.

We need to prove right continuity, i.e., $\pi(p^+,t)=\pi(p,t)$.
This is trivially true on the hypothenuse, because this is the
right-hand boundary of the domain. Consider $p<t$. 
Let $p_n\downarrow p$ and $\gamma_n$ be such that
$\lim_{n\to\infty}\P(S_{\gamma_n}^{p_n}\geq t)=\pi(p^+,t)$.
By a standard diagonal argument we can assume that $\gamma_n$ converges
coordinatewise to some $\gamma$, which may not sum up to one. 
It cannot be the all zero sequence, i.e., the sum is not zero,
by the same argument as in the proof of theorem~\ref{thm0}.
The sequence $\gamma$ therefore sums up to $1-c$ for some $0\leq c<1$.
Again, we split $S^p_\gamma=H+T$ where $H=\sum_{j\leq i_0} c_j\beta_j^p$
and $T=\sum_{j>i_0} c_j\beta_j^p$. 
We choose $i_0$ such that $\mathbf E[T]<\frac\epsilon 4$ and 
$c_{i_0}<\epsilon^4$.
Similarly, $S_{\gamma_n}=H_n+T_n$
where $H_n$ converges to $H$ in probability, $\mathbf E[T_n]<pc+\frac\epsilon 2$
and	$\mathrm{Var}(T_n)<\epsilon^4$ for sufficiently large~$n$.
As in the previous proof, 
Chebyshev's inequality and convergence in probability imply
\[
\P(S_{\gamma_n}^{p_n}\geq t)\leq \P(H_n\geq t-cp-\epsilon)+\epsilon\leq \P(H\geq t-cp-2\epsilon)+\epsilon.
\]
for sufficiently large $n$.
By taking limits $n\to\infty$ and $\epsilon\to 0$ 
it follows that $\pi(p^+,t)\leq \P(S_\gamma^p\geq t-cp)$.
If we standardize $\gamma$ to a sequence $\bar\gamma$ so that we
get a convex combination, we again find that $\pi(p^+,t)\leq \P(S_{\bar\gamma}^p\geq t)$.
\end{proof}

\section{Favorable odds}

We consider $\frac 12\leq p < t$. In this case, bold play comes down to
a single stake $c_1=1$. 
We say that $I\subset \Z/n\Z$ is an \emph{interval of length $a<n$} if 
$I=[b,b+a)=\{b,b+1,\ldots,b+a-1\}$
for some $b$, which we call the initial element. 
We say that two intervals $I$ and $J$ are \textit{separate} if $I\cup J$ is
not an interval.  If $\mathcal F$ is a family of sets, then we write $\bigcup\mathcal F$
for the union of all these sets.

\begin{lemma}\label{pro3}
Let $\mathcal F$ be a family of $k$ intervals of length $a$ in $\Z/n\Z$
such that $\bigcup \mathcal F$ is a proper subset.
Then $|\bigcup{\mathcal F}|\geq k+a-1$.
\end{lemma}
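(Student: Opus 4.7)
The plan is to decompose the union $U = \bigcup \mathcal F$ into its maximal arcs and then count members of $\mathcal F$ within each arc. Since $U$ is a proper subset of $\Z/n\Z$, the complement is nonempty, so $U$ admits a well-defined expression as a disjoint union of pairwise separate arcs $J_1,\ldots,J_m$ (its connected components in the cycle). Let $\ell_i=|J_i|$, so $|U|=\sum_i\ell_i$. Without the properness assumption this decomposition would fail, since on the whole cycle there are no gaps; flagging this is the only subtle point in the argument.

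Next I would observe that every $I\in\mathcal F$ is contained in exactly one $J_i$. Indeed $I$ is a connected arc lying in $U$, so it lies in a single connected component. This lets me define $k_i$ as the number of intervals of $\mathcal F$ contained in $J_i$, with $\sum_i k_i=k$. Moreover each $J_i$ must contain at least one member of $\mathcal F$: pick any point $x\in J_i\subseteq U$, choose $I\in\mathcal F$ with $x\in I$, and the previous observation gives $I\subseteq J_i$. Hence $k_i\geq 1$ for every $i$, and in particular $m\geq 1$.

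The key counting step is the bound $k_i\leq \ell_i-a+1$. An interval of length $a$ contained in $J_i$ is determined by its initial element, and that initial element must be one of the first $\ell_i-a+1$ positions of $J_i$ in order that the whole interval fit inside. Since $\mathcal F$ is a family of distinct intervals, at most $\ell_i-a+1$ of them can lie in $J_i$, which rearranges to $\ell_i\geq k_i+a-1$. Summing over $i$ gives
\[
|U|=\sum_{i=1}^m\ell_i\;\geq\;\sum_{i=1}^m(k_i+a-1)\;=\;k+m(a-1)\;\geq\;k+a-1,
\]
using $m\geq 1$ and $a\geq 1$. This completes the proof. I do not foresee a genuine obstacle: the only thing that could trip one up is convincing oneself that the connected-component decomposition really is available, and this is precisely where the hypothesis that $\bigcup\mathcal F$ is proper is used.
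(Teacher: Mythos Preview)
Your proof is correct and follows essentially the same approach as the paper: both decompose $\bigcup\mathcal F$ into its maximal arcs (the paper calls their number $c$, you call it $m$), observe that an arc of length $\ell$ contains exactly $\ell-a+1$ subintervals of length $a$, and sum to obtain $|\bigcup\mathcal F|\geq k+m(a-1)\geq k+a-1$.
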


\begin{proof}
$\bigcup \mathcal F$
is a union of say $c\geq 1$ separate intervals, all of lengths $\geq a$.
Any interval of length $b\geq a$ contains $b-(a-1)$ intervals of length $a$.
Therefore, $\bigcup\mathcal F$ contains $|\bigcup\mathcal F|-c(a-1)$ intervals
of length $a$.
It follows that $|\bigcup\mathcal F|-c(a-1)\geq k$.
\end{proof}

Two families $\mathcal F$ and $\mathcal G$ are \textit{cross-intersecting}
if $I\cap J\not=\emptyset$ for all $I\in\mathcal F$ and $J\in\mathcal G$.

\begin{lemma}\label{pro4}
Let $\mathcal F$ be a family of $k$ intervals of length $k$ in $\Z/n\Z$.
Let $\mathcal G$ be a family of intervals of length $a\leq n-k$ such
$\mathcal F$ and $\mathcal G$ are cross-intersecting.
Then $|\mathcal G|\leq a$.
\end{lemma}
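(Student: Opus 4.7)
The plan is to parameterize intervals by their initial elements in $\Z/n\Z$ and translate the cross-intersecting hypothesis into a covering statement to which Lemma~\ref{pro3} applies. For each $I\in\mathcal F$, the complement $I^c$ is an interval of length $n-k$. Since $a\leq n-k$, the set $F_I$ of initial elements $c$ for which $J_c=[c,c+a)$ fits inside $I^c$ (equivalently, misses $I$) is itself an interval of $\Z/n\Z$, of length $(n-k)-a+1=n-k-a+1$.

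Cross-intersecting with $\mathcal F$ is the same as the condition that the initial element of every $J\in\mathcal G$ lies \emph{outside} the union $\bigcup_{I\in\mathcal F}F_I$. If that union is all of $\Z/n\Z$, then no valid initial element exists and $\mathcal G=\emptyset$, so the conclusion is immediate. Otherwise the family $\{F_I:I\in\mathcal F\}$ is a family of $k$ intervals of common length $n-k-a+1$ in $\Z/n\Z$ whose union is a proper subset, so Lemma~\ref{pro3} applies (with $k$ intervals of length $n-k-a+1$) and yields
\[
\Bigl|\bigcup_{I\in\mathcal F}F_I\Bigr|\ \geq\ k+(n-k-a+1)-1\ =\ n-a.
\]
Consequently the complement (the set of admissible initial elements) has size at most $a$, and since distinct intervals of length $a$ in $\Z/n\Z$ have distinct initial elements, $|\mathcal G|\leq a$.

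The only delicate point is the bookkeeping: verifying that $F_I$ really is an interval of the claimed length (a direct unpacking of ``$J_c\subseteq I^c$'') and that the parameters are in the range where Lemma~\ref{pro3} is meaningful, namely $1\leq n-k-a+1<n$, which follows from the hypothesis $a\leq n-k$ together with $k\geq 1$. There is no genuine obstacle beyond this arithmetic check; the entire argument reduces to dualizing ``intersect $I$'' to ``avoid the forbidden interval $F_I$'' and invoking the preceding lemma.
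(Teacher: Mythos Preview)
Your proof is correct and follows essentially the same approach as the paper's: both parameterize intervals of length $a$ by their initial elements, identify for each $I\in\mathcal F$ the interval of length $n-k-a+1$ of initial elements whose $a$-interval misses $I$, apply Lemma~\ref{pro3} to this family of $k$ intervals to get a union of size at least $n-a$, and conclude that at most $a$ initial elements remain. The only cosmetic difference is that the paper first describes the complementary interval (of length $k+a-1$) of initial elements whose $a$-interval \emph{meets} $I$ before passing to the complement, whereas you go directly to the forbidden set $F_I$.
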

\begin{proof}
Let $I=[b,b+k)$ be any element in $\mathcal F$.
An interval $[c,c+a)$ intersects $I$ if and only
if $c\in [b-a+1,b+k)$, which is an interval 
of length $k+a-1$. Therefore, the set $\mathcal I$ of initial elements $c$ of intervals
in $\mathcal G$ is contained in an intersection of $k$ intervals
of length $k+a-1$. The complement of $\mathcal I$ thus contains a union
of $k$ intervals of length $n-k-a+1$. 
By the previous lemma, this union has cardinality $\geq n-a$.
Therefore, $\mathcal I$ contains at most $a$ elements.
\end{proof}

\begin{lemma}\label{pro6}
Let $(V,\mu)$ be a finite measure space such that $\mu(V)=b$ and 
let $V_i\subset V$ for $i=1,\ldots, k$ be such that $\mu(V_i)\geq t$.
Then $\mu\left(\bigcap V_i\right)\geq kt-(k-1)b$.
\end{lemma}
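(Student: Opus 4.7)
The plan is to pass to complements and apply the union bound. Write $V_i^c = V \setminus V_i$, so that $\mu(V_i^c) = b - \mu(V_i) \leq b - t$ for each $i$. Then by finite subadditivity of $\mu$,
\[
\mu\Bigl(\bigcup_{i=1}^k V_i^c\Bigr) \leq \sum_{i=1}^k \mu(V_i^c) \leq k(b-t).
\]

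Since $\bigcap_{i=1}^k V_i$ is the complement in $V$ of $\bigcup_{i=1}^k V_i^c$, we get
\[
\mu\Bigl(\bigcap_{i=1}^k V_i\Bigr) = b - \mu\Bigl(\bigcup_{i=1}^k V_i^c\Bigr) \geq b - k(b-t) = kt - (k-1)b,
\]
which is the desired inequality. No obstacle arises here: the statement is essentially a one-line Bonferroni bound, and the only thing to watch is that $\mu$ is a finite (hence subadditive) measure, which is given.
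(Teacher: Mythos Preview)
Your proof is correct and is essentially identical to the paper's own argument: pass to complements, apply the union bound $\mu(\bigcup V_i^c)\leq\sum(b-\mu(V_i))$, and rearrange to obtain $\mu(\bigcap V_i)\geq kt-(k-1)b$.
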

\begin{proof}
\[
\mu\left(\bigcap V_i\right) = b - \mu\left(\bigcup V_i^c\right) \geq b 
- \sum (b -\mu(V_i)) \geq kt -(k-1)b.
\]
\end{proof}

\begin{theorem}\label{thm2}
If $\frac {k}{k+1}< p\leq \frac {k+1}{k+2}<t$
for some positive integer $k$, then bold play is optimal.
\end{theorem}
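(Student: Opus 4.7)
The plan is to reduce to finite convex combinations by Proposition~\ref{pro2} and then split according to the size of $c_1$. Throughout let $\gamma=(c_1\geq c_2\geq\cdots\geq c_n)$ be finite and non-negative with $\sum c_i=1$. Since $t>(k+1)/(k+2)\geq 1/2$ gives $\lfloor 1/t\rfloor=1$, bold play is the single stake $c_1=1$, realising $\P(S_\gamma\geq t)=\P(\beta_1=1)=p$. It therefore suffices to prove the matching upper bound $\P(S_\gamma\geq t)\leq p$ for every admissible $\gamma$. I would split according to whether $c_1$ exceeds the slack $s:=1-t$ or not.

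In the \emph{easy case} $c_1>1-t$: if $\beta_1=0$, then $S_\gamma\leq\sum_{i\geq 2}c_i=1-c_1<t$, so $\{S_\gamma\geq t\}\subset\{\beta_1=1\}$ and $\P(S_\gamma\geq t)\leq p$, matching bold play.

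In the \emph{hard case} $c_1\leq 1-t$: every $c_i\leq c_1<1/(k+2)$, and so $n\geq k+3$. This is where Lemmas~\ref{pro3}, \ref{pro4}, and~\ref{pro6} come in. I would approximate $c_i=m_i/N$ for a common denominator $N$ (justified by the continuity of $\pi$) and transfer the problem to $\Z/N\Z$: place consecutive arcs $C_i$ with $|C_i|=m_i$, and for each outcome $\beta$ set $W(\beta)=\bigcup_{\beta_i=1}C_i$, so that winning corresponds to $|W(\beta)|\geq tN$, equivalently $|W(\beta)^c|\leq sN$. The core step associates to each winning $\beta$ a family of intervals of length $k$ in $\Z/N\Z$ encoding its essential winning positions; Lemma~\ref{pro6} gives a lower bound on the intersection of these intervals, and a cross-intersecting secondary family of intervals of length $a$ matching the slack is capped by $a$ via Lemma~\ref{pro4} (which itself rests on the packing estimate of Lemma~\ref{pro3}). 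Converting this count back into a probability yields $\P(S_\gamma\geq t)\leq p$.

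The main obstacle I anticipate is calibrating the encoding precisely enough to recover the bound $p$ rather than merely $(k+1)/(k+2)$. A naive rotation-symmetry argument, realising the Bernoullis as $k+2$ evenly spaced rotates of a length-$p$ arc on the unit circle and summing the corresponding tail events, already shows $\P(S_\gamma\geq t)\leq(k+1)/(k+2)$, because each sample lies in at most $k+1$ of the rotates and hence $\sum_j\mathbf{1}\{S_\gamma^{(j)}\geq t\}\leq k+1$ almost surely. Sharpening this to $p$ throughout the full range $p\leq(k+1)/(k+2)$ is exactly what the cross-intersecting interval lemmas are designed to accomplish, and it is the step I expect to require the most care.
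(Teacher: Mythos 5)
Your preliminary reductions are sound, and the easy case $c_1>1-t$ is handled correctly: if $\beta_1=0$ then $S_\gamma\leq 1-c_1<t$, so $\{S_\gamma\geq t\}\subset\{\beta_1=1\}$. But the hard case, which is the entire content of the theorem, is left as a sketch that does not close, and you say as much yourself. There are two substantive issues. First, the discretization is set up the wrong way round. You propose to write $c_i=m_i/N$ and lay down fixed consecutive arcs $C_i$ in $\Z/N\Z$, keeping the Bernoullis random. The paper instead discretizes the success probability, $p=a/b$, and realizes $\beta_i$ as $1_{[0,a)}(X_i)$ where $X_i$ is uniform on $\Z/b\Z$. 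This reverses the randomness: the weights $c_i$ are placed at random positions, the winning set is the deterministic arc $[0,a)$, and by rotation invariance $\P(S_\gamma\geq t)=\frac1b\E[J]$ where $J(\omega)$ counts the winning rotations. That is the formulation in which Lemmas~\ref{pro3}, \ref{pro4}, \ref{pro6} apply; with fixed arcs and random Bernoullis there is no natural family of intervals in $\Z/N\Z$ whose cross-intersection structure you can invoke. Second, and decisively, the ``core step'' is not carried out: you write that a family of intervals is to be ``associated'' and ``calibrated'' but do not identify the intervals, do not show they are cross-intersecting, and do not derive $J\leq a$. The actual argument is the pigeonholing $\bigl|\bigcap_{l=0}^k(\mathcal J(\omega)-la)\bigr|\geq(k+1)a-kb$ from Lemma~\ref{pro6}, the observation that each such $i$ gives a short interval $I_i$ of high load whose complement has load $<t$, so every winning arc must meet every $I_i$, and then Lemma~\ref{pro4} caps the number of winning arcs at $a$. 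None of this is in your proposal. Your rotation-symmetry observation (yielding $(k+1)/(k+2)$ from $k+2$ rotates) is correct but, as you note, is only the trivial bound; getting $p=a/b$ requires working with $b$ rotates and the interval machinery, precisely the step you have not supplied. As written this is an outline that identifies the right tools but does not constitute a proof.
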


\begin{proof}
Bold play gives a probability $p$ of reaching $t$. We
need to prove that $\P(S_\gamma\geq t)\leq p$ for arbitrary $\gamma$.
By proposition~\ref{pro2} we may assume that $\gamma$ is finite. It
suffices to prove that $\P(S_\gamma \geq t)\leq p$ for rational $p$, since
$\pi(p,t)$ is monotonic in $p$.

Let $n$ be the number of non-zero $c_i$ in $\gamma$ and let~$p=\frac ab$. 
Let $X_i$ be a sequence of
$n$ independent discrete uniform $U\{0,b-1\}$ random variables, i.e,
$X_i=c$ for $c\in\{0,\ldots,b-1\}$ with probability $\frac 1b$. 
Let $B_i^0=1_{[0,a)}(X_i)$
for $1\leq i\leq n$.
Then $S_{\gamma}$ and $Y^0=\sum c_i B_i^0$ are identically distributed.
Think of $c_iB_i^0$ as an assignment of weight $c_i$ to a random element
in $\{0,\ldots,b-1\}=\mathbb Z/b\mathbb Z$. 
Let $\ell(j)$ be the sum of the coefficients -- the load --
that is assigned to~$j\in\mathbb Z/b\mathbb Z$. 
Then $Y^0=\ell(0)+\cdots+\ell(a-1)$, i.e., $Y^0$ is the load of $[0,a)$.
Instead of $[0,a)$ we might as well select any interval $[j,j+a)\subset\mathbb Z/b\mathbb Z$.
If $Y^j$ is the load of $[j,j+a)$, then $S_{\gamma}\sim Y^j$,
and $\P(S_\gamma\geq t)=\frac 1b \sum \P(Y^j\geq t)$.
We need to prove that $\sum \P(Y^j\geq t)\leq a$.

Let $\Omega$ be the sample space of the $X_i$. For $\omega\in\Omega$,
let $J(\omega)$ be the cardinality of
$\mathcal J(\omega)=\{j\colon Y^j(\omega)\geq t\}\subset \mathbb Z/b\mathbb Z$. 
In particular, $\P(S_\gamma\geq t)=\frac 1b \mathbf E[J]$.
It suffices to prove that $J\leq a$. 
Assume that $J(\omega)\geq a$ for some $\omega\in\Omega$.
Apply lemma~\ref{pro6} to the counting measure to find 
\[\left|\bigcap_{l=0}^k (\mathcal J(\omega)-la)\right|\geq (k+1)a-kb.\]
Note that $i\in \mathcal J(\omega)-j$ if and only if $[i+j,i+j+a)$ has load~$\geq t$.
Therefore, there are at least $(k+1)a-kb$ elements $i$ such that 
the intervals $[i,i+a), [i+a,i+2a),\ldots,[i+ka,i+(k+1)a)$ all have load $\geq t$.
The intersection of these $k+1$ intervals is equal to
\[
I_{i}=[i,i+(k+1)a-kb)\] 
It has load
$\geq (k+1)t-k$
by lemma~\ref{pro6}. 
Its complement $I_i^c$ has load $\leq k+1-(k+1)t<t$.
If $j\in \mathcal J(\omega)$ then
$[j,j+a)$ has load $\geq t$ and therefore it
intersects $I_{i}$ for all $i\in \bigcap_{l=0}^k (J(\omega)-la)$.
There are $\geq (k+1)a-kb$ 
such intervals $I_i$, and we denote this
family by $\mathcal F$.
Let $\mathcal G$ be the family of $[j,j+a)$ with $j\in J(\omega)$. 
Lemma~\ref{pro4} applies since the length of $I_i$
is $(k+1)a-kb$ and since $a\leq b-\left((k+1)a-kb\right)$.
We conclude
that $J(\omega)\leq a$.
\end{proof}

With some additional effort, we can push this result to the hypothenuse.

\begin{proposition}\label{thm2diag}
If $p=t=\frac {k+1}{k+2}$
for some positive integer $k$, then bold play is optimal if $k>1$,
and $c_1=c_2=c_3=\frac 13$ is optimal if $k=1$.
\end{proposition}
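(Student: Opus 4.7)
The plan is to extend the uniform-placement argument of theorem~\ref{thm2} to the boundary case $t = p$. By proposition~\ref{pro2} it suffices to consider finite $\gamma$. Set $a = k+1$, $b = k+2$, place items at uniform random positions $X_i \in \mathbb{Z}/b$, let $\ell(m) = \sum_{i:X_i=m} c_i$ be the load at $m$, and let $Y^j$ be the load of $[j,j+a)$. Since $[j,j+a)$ has complement the single point $\{j-1\}$, we have $Y^j = 1 - \ell(j-1)$, so the event $\{Y^j \geq p\}$ is the event $\{\ell(j-1) \leq 1/b\}$. Consequently
\[
\P(S_\gamma \geq p) = \frac{\mathbf{E}[J]}{b} = 1 - \frac{\mathbf{E}[H]}{b}, \qquad H(\omega) := |\{m : \ell(m) > 1/b\}|.
\]
Hence $\pi(p,p) \leq p$ is equivalent to $\mathbf{E}[H] \geq 1$, and $\pi(2/3,2/3) \leq 20/27$ (needed for $k = 1$) is equivalent to $\mathbf{E}[H] \geq 7/9$. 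By symmetry of the uniform placement, $\mathbf{E}[H] = b\, \P(\ell(0) > 1/b)$, and $\ell(0)$ is distributed as $\sum c_i B_i$ with $B_i$ iid Bernoulli$(1/b)$. The matching lower bounds come from bold play ($H \equiv 1$ deterministically) and, for $k = 1$, the three-way split $(1/3,1/3,1/3)$, which a direct evaluation shows achieves $\mathbf{E}[H] = 7/9$ and $\P(S_\gamma \geq 2/3) = 20/27$.

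For $k > 1$ (so $b \geq 4$), I split on the largest coefficient $c_1$. If $c_1 > 1/b$, then $B_1 = 1$ alone forces $\ell(0) \geq c_1 > 1/b$, giving $\P(\ell(0) > 1/b) \geq \P(B_1 = 1) = 1/b$ directly. In the harder case $c_1 \leq 1/b$, all $c_i \leq 1/b$ and hence $n \geq b \geq 4$, and I would complete the argument by a compensation on the sample space. For each balanced configuration $\omega$ in $\mathcal{E} := \{\ell \equiv 1/b\}$ (on which $H = 0$), a ``two-swap" that moves one item from each of two distinct positions into two other (also distinct) positions produces $\omega'$ with two new heavy positions, so $H(\omega') \geq 2$. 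The extra heaviness of these swapped configurations is to compensate for the deficit from $\mathcal{E}$, and the bookkeeping is possible precisely because $b \geq 4$ provides the four distinct positions needed for the swap.

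For $k = 1$ (so $b = 3$), $H$ takes values in $\{0,1,2\}$, and the bound $\mathbf{E}[H] \geq 7/9$ rearranges to $\P(\mathcal{E}) - \P(H = 2) \leq 2/9$. I would establish this sharper inequality by a direct combinatorial analysis of the multinomial-type probability of the balanced placement: the maximum over all $\gamma$ of $\P(\mathcal{E})$ is attained by $(1/3,1/3,1/3)$ at the value $2/9$, while $\P(H = 2) = 0$ for that particular $\gamma$. A majorization / convexity argument on the weight distribution $\gamma$ handles general $\gamma$.

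The main obstacle is the compensation argument for $k > 1$ with $c_1 \leq 1/b$: one must construct the two-swap map $\omega \mapsto \omega'$ so that, after accounting for its multiplicity, the total probability of the super-heavy configurations dominates $\P(\mathcal{E})$. This is where $b \geq 4$ enters essentially; for $b = 3$ the analogous construction cannot produce two disjoint heavy positions, reflecting the genuine non-optimality of bold play for $k = 1$. The extremal claim $\P(\mathcal{E}) \leq 2/9$ in the $k = 1$ case is independently delicate and requires a separate majorization argument on the multinomial distribution.
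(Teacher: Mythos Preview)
Your framework matches the paper's: the uniform-placement coupling, the reduction to bounding $\mathbf E[J]$ (equivalently $\mathbf E[H]$), and the observation that the only obstruction is the balanced event $\mathcal E=\{H=0\}=\{J=k+2\}$, which can occur only when every $c_i\le 1/b$. Your handling of the case $c_1>1/b$ is fine and is equivalent to the paper's remark that $J=k+2$ is then impossible.

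The gaps are in the two places you flag with ``I would''. First, the two-swap for $k>1$ is not specified well enough to yield a measure-preserving injection from $\mathcal E$ into $\{H\ge 2\}$. Saying ``move one item from each of two positions into two other positions'' does not pick out which items move, so the map is many-valued and you cannot conclude $\P(\mathcal E)\le \P(H\ge 2)$. The paper resolves this by moving the \emph{last two} coefficients $c_{n-1},c_n$: on $\mathcal E$, before these two are placed at least two bins $i_1,i_2$ are already full (since $k>1$), and rerouting $c_{n-1}\to i_1$, $c_n\to i_2$ gives $\bar\omega$ with $J(\bar\omega)=k$. Invertibility (hence injectivity) is then argued from the fact that in $\bar\omega$ exactly two loads exceed $1/b$, and their excess values identify which of $c_{n-1},c_n$ went where (with a separate but equally short pairing argument when $c_{n-1}=c_n$). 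This is precisely the missing idea in your sketch, and it is also where $k>1$ is used: one needs two already-full bins available as targets.

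Second, for $k=1$ you reduce to $\P(\mathcal E)-\P(H=2)\le 2/9$ and then assert the stronger $\P(\mathcal E)\le 2/9$ via an unproved majorization over $\gamma$. The paper gets $\P(\mathcal E)\le 2/(k+2)^2$ for all $k$ by the same last-two-coefficients device: conditional on the first $n-2$ placements, completing all loads to $1/(k+2)$ forces $c_{n-1},c_n$ into at most two specific bins, so the conditional probability is at most $2/(k+2)^2$. This immediately gives $\mathbf E[J]\le k+1+2/(k+2)^2$, which equals $20/9$ at $k=1$ and yields $\P(S_\gamma\ge 2/3)\le 20/27$, attained by $(1/3,1/3,1/3)$. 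So no separate majorization argument is needed; the single placement trick handles both the $k=1$ bound and, combined with the injection above, the $k>1$ bound.
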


\begin{proof}
By proposition~\ref{pro2} it suffices to prove that 
$\P(S_\gamma\geq t)\leq p$ for finite~$\gamma$.
We adopt the notation of the previous theorem.
Let $n$ be the number of non-zero coefficients in $\gamma$,
and let $X_i$ be $n$ random selections of $\{0,\ldots,k+1\}$.
We assign the coefficients according to these selections and 
let $Y^j=1-\ell(j)$ be the load of the 
set $\{0,\ldots,k+1\}\setminus\{j\}$.
Each $Y^j$ is identically distributed to $S_\gamma$.
For $\omega\in\Omega$ let $J(\omega)$ be the number of $Y^j(\omega)$
that reach the threshold, or equivalently, the number
of loads $\ell(j)\leq \frac 1{k+2}$. 
We have $\frac 1{k+2}\mathbf E[J]=\P(S_\gamma\geq t)$.
In the proof above, we showed that $J\leq k+1$ if $t>p=\frac{k+1}{k+2}$. 
This is no longer true now that we have $t=p$.
It may happen that $J(\omega)=k+2$ in which case
all $Y^j(\omega)$ are equal to $\frac {k+1}{k+2}$
and all loads $\ell(j)$ are equal to $\frac 1{k+2}$.
Note that 
this can only happen if all $c_i$ are bounded by $\frac{1}{k+2}$, so $n\geq k+2$.

We think of the coefficients as being assigned one by one in increasing
order. In particular, $c_{n-1}$ and $c_n$ are placed last. 
If $J=k+2$, then either $k$ or $k+1$ of the 
loads are equal to $\frac 1{k+2}$ before $c_{n-1}$ and $c_n$ are placed. 
In the first case, there are two remaining loads $<\frac 1{k+2}$ and
the probability that $c_{n-1}$ are $c_n$ are placed here is $\frac 2{(k+2)^2}$.
In the second case, there is only one remaining load $<\frac 1{k+2}$ and
the probability that $c_{n-1}$ and $c_n$ are placed here is $\frac 1{(k+2)^2}$.
We conclude that 
$\P(J=k+2)\leq\frac{2}{(k+2)^{2}}$ and therefore
\[
\mathbf E[J]\leq (k+2)\P(J=k+2)+(k+1)\P(J<k+2)=k+1+\P(J=k+2)
\] 
is bounded by $k+1+\frac{2}{(k+2)^2}$.
Thus we obtain $\P(S_\gamma\geq t)\leq \frac {k+1}{k+2}+\frac{2}{(k+2)^{3}}$.
This bound is reached if $k=1$ and $c_1=c_2=c_3=\frac 13$.

Let $k>1$ and let $J(\omega)=k+2$. 
We first consider the case that $c_{n-1}\not=c_n$.
suppose there are two remaining loads $<\frac 1{k+2}$ before
$c_{n-1}$ and $c_{n-2}$ are placed, then each
$c_{n-1}$ and $c_n$ can only be assigned to a unique place
to complete all loads to $\frac 1{k+2}$.
Since $k>1$, there are at least
two loads $\ell(i_1)=\ell(i_2)=\frac 1{k+2}$ before the final
two coefficients are placed. Let $\bar\omega$ assign
$c_j$ for $j<n-1$ in the same way as $\omega$, but it
reassigns $c_{n-1}$ to $i_1$ and $c_{n}$ to $i_2$.
Then $J(\bar\omega)=k$, because the loads at $i_1$ and
$i_2$ exceed the threshold.
We can reconstruct $\omega$ from $\bar\omega$
because the loads at $i_1$ and $i_2$ are the only ones that exceed the
threshold for $\bar\omega$, and their values are different because $c_{n-1}\not=c_n$.
We have a $1-1$ correspondence between $\omega\in\{J=k+2\}$ and $\bar\omega\in\{J=k\}$.
Let $\mathcal E=\{J=k+2\}$ and let $\mathcal F=\{\bar\omega\colon \omega\in \mathcal E\}$.
Then $\P(\mathcal F)=\P(\mathcal E)$
and $\mathcal E\cap\mathcal F=\emptyset$. 
This implies that 
\[
\mathbf E[J]\leq(k+2)\P(\mathcal E)+k\P(\mathcal F)+(k+1)\P(\mathcal E^c\cap\mathcal F^c)\leq k+1.
\]
In particular $\P(S_\gamma\geq t)\leq \frac {k+1}{k+2}$ and bold play is optimal.

Finally, consider the remaining case $k>1$ and $J(\omega)=k+2$ and $c_{n-1}=c_n$. 
In this case, we may switch the assignments of $c_{n-1}$ and $c_n$ to complete the loads.  
Let $\omega'\in\Omega$ represent this switch (it may be equal to $\omega$ if the
assignments are the same). 
Again, let $i_1$ and $i_2$ be two locations for which the
loads have already been completed before $c_{n-1}$ and $c_n$ are placed.  
Let $\{\bar \omega,\bar{\omega}'\}$ be the elements which assign 
the first $n-2$ coefficients in the same way, but assigns the final
two elements to $i_1$ and $i_2$.
In particular, $J(\bar\omega)=J(\bar\omega')=k$.
We can reconstruct $\{\omega,\omega'\}$ from $\{\bar\omega,\bar\omega'\}$, the correspondence
is injective, so again $\P(\mathcal E)=\P(\mathcal F)$
and we conclude in the same way that bold play is optimal.   		
\end{proof}

If bold play is stable, as discussed above,
then proposition~\ref{thm2diag} would imply that bold play
is optimal if $p\leq\frac{k+1}{k+2}\leq t$ for $k>1$.
Establishing this stability of bold play appears to be hard, 
and we are able to settle only one specific case 
in the range $p<\frac{k+1}{k+2}=t$.

\begin{proposition}
If $p=\frac{2k+1}{2k+3}$ and $t=\frac {k+1}{k+2}$, then bold play is optimal if $k>1$.
\end{proposition}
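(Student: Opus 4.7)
Plan: Mirror the symmetrization in the proofs of theorem~\ref{thm2} and proposition~\ref{thm2diag}, but using pairs of excluded positions instead of singletons. By proposition~\ref{pro2} we may assume $\gamma$ has finitely many nonzero coefficients $c_1, \ldots, c_n$. Let $X_1, \ldots, X_n$ be iid uniform on $\{0, 1, \ldots, 2k+2\}$ and set $\ell(j) = \sum_{i: X_i = j} c_i$. For each pair $\{j_1, j_2\} \subset \{0, \ldots, 2k+2\}$, the random variable
\[
Y^{j_1,j_2} = \sum_i c_i\,\mathbf{1}[X_i \notin \{j_1, j_2\}] = 1 - \ell(j_1) - \ell(j_2)
\]
is a sum of the $c_i$ times iid Bernoullis with parameter $(2k+1)/(2k+3) = p$, hence has the same distribution as $S_\gamma$. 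If $J$ denotes the number of pairs with $Y^{j_1,j_2} \geq t$, equivalently with $\ell(j_1) + \ell(j_2) \leq \alpha := 1-t = 1/(k+2)$, then $\P(S_\gamma \geq t) = \E[J]/\binom{2k+3}{2}$, so it suffices to prove $\E[J] \leq \binom{2k+2}{2}$.

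The plan is to establish the stronger \emph{pointwise} bound $J \leq \binom{2k+2}{2}$ for every realization, equivalently: at least $2k+2$ pairs of positions have load-sum exceeding $\alpha$. If some load exceeds $\alpha$, the maximizing position already contributes $2k+2$ such bad pairs and we are done. Otherwise every load lies in $[0, \alpha]$, and I would split on $q := |\{j: \ell(j) = \alpha\}|$. For $1 \leq q \leq k+1$: every saturated position forms a bad pair with every positive non-saturated load, giving $qp'$ cross-bad pairs with $p' \geq k+3-q$ (because $\sum_{P'} \ell = (k+2-q)\alpha$ with each term strictly less than $\alpha$); combined with the $\binom{q}{2}$ in-$Q$ bad pairs this already exceeds $2k+2$. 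The boundary case $q = k+2$ (where $p' = 0$ and only in-$Q$ pairs contribute) yields exactly $\binom{k+2}{2} = (k+1)(k+2)/2$ bad pairs, which is $\geq 2k+2$ precisely when $k \geq 2$; this is the first place the hypothesis $k > 1$ enters. For $q = 1$ with some vanishing non-saturated loads, an additional sum-of-pair-sums estimate inside $P' = \{j: 0 < \ell(j) < \alpha\}$ supplies the needed bad pairs in $P'$: from $\sum_{P'}\ell = (k+1)\alpha$ and each $\ell(j) < \alpha$ one derives $b_{P'} \geq (2k+1-z_0)z_0/2$, where $z_0$ counts zero loads, and $z_0 \leq k$ makes this at least $z_0$, which closes the count.

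The main obstacle is the case $q = 0$, where every load is strictly smaller than $\alpha$. The elementary identity $\sum_{\{i,j\}}(\ell(i) + \ell(j)) = 2k+2$ together with the bound that each pair-sum is at most $2\max\ell \leq 2\alpha$ only yields $b \geq k+1$, short by a factor of two. To close the gap I would refine using $B := \{j: \ell(j) > \alpha/2\}$: a mass-conservation argument based on $\sum_A \ell \leq (2k+2)\alpha/2$ forces $|B| \geq 2$, since $|B| = 1$ would push that single large load up to $\alpha$, contradicting $q = 0$. All $\binom{|B|}{2}$ in-$B$ pairs are then automatically bad, and a quantitative estimate on cross pairs between $B$ and $A = B^c$---exploiting the strict inequality $\max\ell < \alpha$ to upgrade the sum-of-pair-sums identity---supplies the remaining pairs. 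Taking expectations of the pointwise bound and dividing by $\binom{2k+3}{2}$ then completes the proof.
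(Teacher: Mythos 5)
Your approach differs genuinely from the paper's. You symmetrize over \emph{all} $\binom{2k+3}{2}$ pairs $\{j_1,j_2\}$, whereas the paper symmetrizes only over the $2k+3$ consecutive pairs (via intervals $[j,2k+j+1)$ in $\Z/(2k+3)\Z$). More importantly, you aim at a \emph{pointwise} bound $J\leq \binom{2k+2}{2}$ on every sample point, whereas the paper does not: the paper allows the maximal value $J=2k+2$ to occur with positive probability and instead constructs an explicit probability-preserving injection from the event $\{J=2k+2\}$ into a disjoint event where $J\leq 2k$, giving $\E[J]\leq 2k+1$ only \emph{in expectation}. (Concretely, when $J=2k+2$ the loads must alternate between $0$ and $\frac1{k+2}$ around the cycle; the paper maps each such arrangement to the one where the nonzero loads are placed consecutively, and notes there are $2k+3$ arrangements of each kind.)

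The gap in your argument is that the pointwise bound is not established. You correctly flag the case $q=0$ (all loads strictly below $\alpha=\frac1{k+2}$) as the main obstacle and only sketch a repair: you prove $|B|\geq 2$ where $B=\{j:\ell(j)>\alpha/2\}$, but $|B|=2$ yields only $\binom{2}{2}=1$ guaranteed bad pair, still far short of $2k+2$, and the promised ``quantitative estimate on cross pairs between $B$ and $A=B^c$'' is not supplied. The $q=1$ case likewise needs the unstated in-$P'$ estimate to close the count, since the cross-plus-in-$Q$ contribution $q(k+3-q)+\binom{q}{2}$ equals only $k+2<2k+2$ at $q=1$. Note also that your target inequality is \emph{tight}: the paper's alternating configuration (loads $\frac1{k+2}$ on positions $0,1,3,5,\ldots,2k+1$ and $0$ elsewhere) produces exactly $2k+2$ bad pairs, so any correct argument must handle this extremal configuration with no slack. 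As it stands, the proof is not complete; you would either have to carry out the missing estimates carefully (and verify they hold with the needed strictness at the extremal point), or fall back on an averaging argument of the paper's type that does not require a pointwise bound.
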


\begin{proof}
We randomly distribute the coefficients of a finite $\gamma$ over $2k+3$ locations. 
Let $Y^j=\ell(j)+\ldots+\ell(2k+j)$ be the load of the discrete interval $[j,2k+j+1)$,
where as before we reduce modulo $2k+3$. Then $S_\gamma\sim Y^j$ and the sum of all
$Y^j$ is equal to $2k+1$. Let $J$ be the number of $Y^j$ that reach the threshold.
Not all $Y^j$ can reach the threshold and therefore $J\leq 2k+2$.
Then
\[\P(S_\gamma\geq t)=\frac {\mathbf E[J]}{2k+3}\leq \frac {2k+1}{2k+3}\P(J\leq 2k+1)+\frac{2k+2}{2k+3}\P(J=2k+2).
\] 
We need to prove that $\P(S_\gamma\geq t)\leq\frac{2k+1}{2k+3}$. If $\P(J=2k+2)=0$ then we are done.
Therefore, we may assume that $\P(J=2k+2)>0$. 
Only one of the $Y^j$ does not meet the threshold and without
loss of generality we may assume it is $Y^{2}$, which has load $1-\ell(0)-\ell(1)$. 
The other $Y^j$ reach the threshold, and since the sum of all $Y^j$
is equal to $2k+1$, we find that
\[
2k+1\geq (2k+2)t+1-\ell(0)-\ell(1).
\] 
In other words, $\ell(0)+\ell(1)\geq \frac 2{k+2}$. If $\ell(0)>\frac 1{k+2}$ then $Y^j$ does not
reach the threshold if it does not include $\ell(0)$. Only $2k+1$ of the $Y^j$ include $0$, contradicting
our assumption that $J=2k+2$. Therefore $\ell(0)\leq \frac 1{k+2}$ and since the same applies to $\ell(1)$
we have in fact that $\ell(0)=\ell(1)=\frac 1{k+2}$. Since $Y^j=1-\ell(j-1)-\ell(j-2)$ we have that 
all $\ell(i)+\ell(i+1)\leq \frac 1{k+2}$ other than $\ell(0)+\ell(1)$. In particular, $\ell(2)=\ell(2k+2)=0$.
We find that $Y^2=\frac k{k+2}$ and $Y^1=Y^{3}=\frac {k+1}{k+2}$. The sum of the remaining $Y^j$,
with $j\not\in\{1,2,3\}$ is at least $2kt$ and the sum of all $Y^j$ is $2k+1$. Since $2kt=2k+1-\frac k{k+2}-\frac{2k+2}{2k+1}$ all the remaining $Y^j$ have to be equal to $t$. 
It follows that the loads alternate between zero and $\frac 1{k+2}$: $\ell(i)=\frac 1{k+2}$ if $i$ is odd and
$\ell(i)=0$ if $i>0$ is even.
We conclude that if $J=k+2$ then all non-zero loads are equal and only two non-zero loads are consecutive.
There are exactly $2k+3$ such arrangements. There are also $2k+3$ arrangements in which the non-zero loads
are consecutive. In this case $J=k+2\leq 2k$. It follows that $\P(J\leq 2k)\geq P(J=2k+2)$, which
implies that $\E[J]\leq 2k+1$. Bold play is optimal.
\end{proof}

These results conclude our analysis of the upper right hand block of figure~\ref{fig:fig1}.
A zigzag of triangles along the hypothenuse remains. Numerical results of
Cs\'oka~\cite{C} suggest that bold play is optimal for all of these triangles,
except for the one touching on $\{(p,p)\colon \frac 12\leq p\leq\frac 23\}$.
In the next section we will confirm that bold play is not optimal for this triangle.

\section{High threshold}

We now consider the case $p\leq \frac 12<t$, when bold play comes
down to a single stake $c_1=1$.
We need to maximize $\mathbf P(S_\gamma\geq t)$ and we 
may assume that $\gamma$ is finite by proposition~\ref{pro2}.
Suppose that $\gamma$ has $\leq n$ non-zero coefficients,
i.e., $c_{n+1}=0$. 
Let $\mathcal F_{t,\gamma}$ be the family of $V\subset\{1,2,\ldots,n\}$, 
such that $\sum_{i\in V} c_i\geq t$.
Let $p(V)=p^{|V|}(1-p)^{n-|V|}$. Then
\begin{equation}\label{equ2}
\mathbf P(S_\gamma\geq t)=\sum_{V\in\mathcal F_{t,\gamma}}p(V).
\end{equation}
Therefore we need to determine the family $\mathcal F_{t,\gamma}$ that maximizes
the sum on the right hand side.
Problems of this type are studied in extremal combinatorics, see~\cite{Filmus}
for recent progress. 
A family $\mathcal F$ is \emph{intersecting} if no two elements are disjoint.
Two standard examples of intersecting families are $\mathcal F_1$, the
family of all $V$ such that $1\in V$, and $\mathcal F_{>n/2}$, the family
of all subsets such that $|V|>n/2$.  
Fishburn et al~\cite{Fish} settled the problem of
maximizing 
\[p(\mathcal F)=\sum_{V\in\mathcal F} p(V)\] 
over all
intersecting families $\mathcal F$:
\begin{theorem}[Fishburn et al]\label{thm3}
For a fixed $n$, let $\mathcal F$ be any intersecting family of subsets
from $\{1,\ldots,n\}$.
If $p\leq \frac 12$ then $p(\mathcal F)$ is maximized by $\mathcal F_1$.
If $p\geq \frac 12$ and $n$ is odd, then $p(\mathcal F)$ is maximized by
$\mathcal F_{>n/2}$.
\end{theorem}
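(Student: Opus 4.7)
The plan is to handle both halves of the theorem through a reduction to maximal intersecting families followed by case-specific arguments. For any intersecting $\mathcal{F}$, its upward closure is still intersecting (a superset of $V$ intersects everything $V$ does) and has at least the same $p$-weight. Extending further to a maximal intersecting family only helps, and in such a family exactly one of each complementary pair $\{V,V^c\}$ lies in $\mathcal{F}$: if neither did, both could be added without conflict.

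The second half of the theorem follows immediately from the pairing bound. Since $\mathcal{F}$ contains at most one set from each pair,
\[
p(\mathcal{F}) \leq \sum_{\{V,V^c\}} \max\bigl(p(V), p(V^c)\bigr).
\]
From $p(V)/p(V^c) = \bigl(p/(1-p)\bigr)^{2|V|-n}$, the assumption $p \geq \tfrac12$ makes this maximum coincide with the larger set of the pair; with $n$ odd, exactly one set in each pair has size $>n/2$, so the right-hand side equals $p(\mathcal{F}_{>n/2})$. The bound is attained because $\mathcal{F}_{>n/2}$ is itself intersecting when $n$ is odd: $|V|+|W|\geq n+1$ forces overlap.

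For the first half ($p\leq\tfrac12$), the analogous pairing bound would pick the \emph{smaller} set in each pair, but that collection fails to be intersecting (it contains $\emptyset$ and all singletons), so the inequality is not tight and a finer argument is required. My plan is to apply shifting (compression): for every $i<j$, replace each $V\in\mathcal{F}$ with $j\in V,\ i\notin V$ by $(V\setminus\{j\})\cup\{i\}$ whenever the latter is not already in $\mathcal{F}$. This classical operation preserves the intersecting property and, since it preserves each $|V|$, preserves $p(\mathcal{F})$, so we may assume $\mathcal{F}$ is left-compressed.

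The main obstacle is the concluding structural step: proving $p(\mathcal{F})\le p$ for a left-compressed, upward-closed, maximal intersecting family. I would proceed by induction on $n$, decomposing
\[
p(\mathcal{F}) \;=\; p\cdot\mu_p(\mathcal{A}) \;+\; (1-p)\,\mu_p(\mathcal{B}),
\]
where $\mathcal{A}=\{V\setminus\{1\}:1\in V\in\mathcal{F}\}$ and $\mathcal{B}=\{V\in\mathcal{F}:1\notin V\}$ both live on $\{2,\ldots,n\}$. Then $\mathcal{B}$ is intersecting, $\mathcal{B}\subseteq\mathcal{A}$ by upward-closedness, and $\mathcal{A},\mathcal{B}$ are cross-intersecting. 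The inductive hypothesis gives $\mu_p(\mathcal{B})\le p$; the crux is a cross-intersecting inequality of Frankl--Tokushige type bounding $\mu_p(\mathcal{A})$ from its cross-intersecting relation with a nonempty $\mathcal{B}$. This is where the left-compressed structure does the real work, and it is the step I expect to require the most care.
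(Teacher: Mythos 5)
For the $p \geq \tfrac12$, $n$ odd half, your pairing argument matches the paper's exactly: each complementary pair contributes at most the weight of its larger member, and $\mathcal F_{>n/2}$ is intersecting when $n$ is odd, so the bound is attained. That part is correct.

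For the $p \leq \tfrac12$ half you take a genuinely different route (upward closure, shifting, induction on $n$, and a cross-intersecting bound on $\mu_p(\mathcal A)$), and you openly leave the decisive step unproved. That is a real gap, not a technicality: the inequality $p\,\mu_p(\mathcal A) + (1-p)\,\mu_p(\mathcal B) \leq p$, equivalently $p\,\mu_p(\mathcal A^c) \geq (1-p)\,\mu_p(\mathcal B)$ for the cross-intersecting pair with $\mathcal B \subseteq \mathcal A$, is precisely where the content of the theorem sits. Compression reorganises $\mathcal F$ but does not by itself deliver that bound; you would need to state and prove a Frankl--Tokushige-type cross-intersecting theorem, which you have not done. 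Everything up to that point is routine bookkeeping, so as written the argument does not close.

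The paper's proof of the $p\le\tfrac12$ case is considerably more economical and sidesteps all of this. Set $c_a = |\{V\in\mathcal F: |V|=a\}|$ and $w_a = p^a(1-p)^{n-a}$. Two constraints are used: the pairing bound $c_a + c_{n-a} \le \binom na$ (at most one of $V,V^c$ lies in $\mathcal F$), and the Erd\H{o}s--Ko--Rado bound $c_a \le \binom{n-1}{a-1}$ for $a<n/2$ (the $a$-sets of $\mathcal F$ form an intersecting $a$-uniform family). Since $p\le\tfrac12$ gives $w_a \ge w_{n-a}$ for $a<n/2$, the joint contribution $c_a w_a + c_{n-a} w_{n-a}$ of each pair of levels is maximised by pushing $c_a$ to its EKR maximum and then filling $c_{n-a}$ up to the pairing bound, which is exactly what $\mathcal F_1$ achieves ($\binom{n-1}{a-1}$ and $\binom{n-1}{a}$, summing to $\binom na$). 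Summing over $a$ gives $p(\mathcal F)\le p(\mathcal F_1)$ directly, level by level, with no compression, no induction, and no cross-intersecting machinery. If you wish to salvage your route you must actually prove the cross-intersecting inequality; otherwise the EKR argument is both shorter and complete.
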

\begin{proof}
Following~\cite{Fish}. 
First suppose $n$ is odd.
At most one of $V$ and $V^c$ can be in $\mathcal F$.
If $p\geq\frac 12$, then $p(V)>p(V^c)$ if $|V|>|V^c|$. 
Therefore $p(\mathcal F)$ is maximal if $\mathcal F$ contains
each set of largest cardinality.
It follows that $\mathcal F_{>n/2}$ maximizes $p(\mathcal F)$
if $n$ is odd and $p\geq \frac 12$. 
 
Now consider an arbitrary $n$ and $p\leq \frac 12$.
Let $c_a=|\mathcal F^a|$ be the cardinality of the subfamily 
$\mathcal F^a=\{V\in\mathcal F\colon |V|=a\}.$ 
At most one of $V$ and $V^c$ can be in $\mathcal F$ and 
therefore $c_a+c_{n-a}\leq \binom na$.
Since $p\leq\frac 12$ we have $p(V)>p(V^c)$ if $|V|<|V^c|$. 
For $a<\frac n2$ we want to maximize $c_a$ under the constraint
$c_a+c_{n-a}\leq \binom na$
(for $a=\frac n2$ it does not matter which of the two subsets we select,
as long as we select one of them).
By the Erd\H{o}s-Ko-Rado theorem, if $a<\frac n2$ then
$c_a$ is maximized by a family of subsets that contain one common element.  
For such a family, $c_a+c_{n-a}=\binom na$.
It follows that $p(\mathcal F_1)$ is maximal if $p\leq \frac 12$.
\end{proof}

Note that 
$\mathcal F_1$ corresponds to $\mathcal F_{t,\gamma}$ if $t>\frac 12$ and $c_1=1$, i.e., bold play.

\begin{corollary}\label{cor1}
If $p\leq \frac 12<t$ then bold play is optimal.
\end{corollary}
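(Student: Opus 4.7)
The plan is to deduce Corollary~\ref{cor1} as an almost-immediate consequence of Theorem~\ref{thm3} (Fishburn et al.), by recognizing that when $t>\tfrac12$ the combinatorial family $\mathcal{F}_{t,\gamma}$ defined just before equation~\eqref{equ2} is automatically intersecting.

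First I would invoke proposition~\ref{pro2} to restrict attention to finite $\gamma$, say with at most $n$ nonzero coefficients, so that equation~\eqref{equ2} is available: $\mathbf P(S_\gamma\geq t)=p(\mathcal F_{t,\gamma})$ with $p(V)=p^{|V|}(1-p)^{n-|V|}$. The next step, which is the key observation, is that $\mathcal F_{t,\gamma}$ is intersecting whenever $t>\tfrac12$. Indeed, if $V,W\in\mathcal F_{t,\gamma}$ were disjoint then
\[
\sum_{i\in V\cup W}c_i=\sum_{i\in V}c_i+\sum_{i\in W}c_i\geq 2t>1,
\]
contradicting $\sum_i c_i=1$.

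Now I would apply Theorem~\ref{thm3}: since $p\leq \tfrac12$ and $\mathcal F_{t,\gamma}$ is intersecting,
\[
p(\mathcal F_{t,\gamma})\leq p(\mathcal F_1),
\]
where $\mathcal F_1$ is the family of all subsets of $\{1,\dots,n\}$ containing the element $1$. A direct computation shows $p(\mathcal F_1)=p$, because
\[
p(\mathcal F_1)=\sum_{V\ni 1}p^{|V|}(1-p)^{n-|V|}=p\sum_{W\subset\{2,\dots,n\}}p^{|W|}(1-p)^{n-1-|W|}=p.
\]
Combining, $\mathbf P(S_\gamma\geq t)\leq p$ for every finite $\gamma$, and since bold play ($c_1=1$) realizes this value (its associated family is exactly $\mathcal F_1$, as noted right before the corollary), bold play is optimal.

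There is essentially no obstacle here; the whole argument is a translation of the optimization of $\mathbf P(S_\gamma\geq t)$ into an extremal set-family problem already solved by Fishburn et al. The only substantive content beyond citing Theorem~\ref{thm3} is the one-line verification that $t>\tfrac12$ forces $\mathcal F_{t,\gamma}$ to be intersecting.
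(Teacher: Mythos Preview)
Your proof is correct and follows essentially the same route as the paper's own argument: observe that $t>\tfrac12$ forces $\mathcal F_{t,\gamma}$ to be intersecting, then invoke Theorem~\ref{thm3} to conclude that the maximum of $p(\mathcal F)$ is $p(\mathcal F_1)=p$, which is attained by bold play. The paper's proof is much terser, but you have simply filled in the details it leaves implicit.
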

\begin{proof}
If $t>\frac 12$ then $\mathcal F_{t,\gamma}$ is intersecting. 
The maximizing family $\mathcal F_1$ corresponds to $\gamma$ with
$\gamma=(1,0,\ldots,0)$. This takes care of the upper left-hand block
in figure~\ref{fig:fig1}.
\end{proof}

The positive odds part of theorem~\ref{thm3} can be applied to the triangle
touching on $\{(p,p)\colon \frac 12\leq p\leq\frac 23\}$
that we mentioned above.  
The family $\mathcal F_{>n/2}$ can be represented by $\mathcal F_{t,\gamma}$
if $n=2k+1$ and $\frac 12<t\leq \frac {k+1}{2k+1}$, by taking
$\gamma=(\frac 1{2k+1},\ldots,\frac 1{2k+1})$.

\begin{corollary}\label{cor2}
If $\frac 12<p\leq t\leq \frac 23$ then bold play is not optimal.
\end{corollary}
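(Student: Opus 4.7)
The plan is to exhibit an explicit $\gamma$ whose tail probability beats bold play throughout the stated range. The remark immediately preceding the corollary already points to the right candidate: for $n=2k+1$ and $\gamma=(\tfrac{1}{2k+1},\ldots,\tfrac{1}{2k+1},0,\ldots)$, the family $\mathcal{F}_{t,\gamma}$ coincides with $\mathcal{F}_{>n/2}$ whenever $\tfrac{1}{2}<t\leq \tfrac{k+1}{2k+1}$. Taking $k=1$ gives $\gamma^\ast=(\tfrac{1}{3},\tfrac{1}{3},\tfrac{1}{3},0,\ldots)$ and the admissible range $\tfrac{1}{2}<t\leq \tfrac{2}{3}$, which matches the hypothesis of the corollary exactly.

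With this choice, $\{S_{\gamma^\ast}\geq t\}$ is exactly the event that at least two of $\beta_1,\beta_2,\beta_3$ succeed, so
\[
\P(S_{\gamma^\ast}\geq t)=3p^2(1-p)+p^3=3p^2-2p^3.
\]
Bold play corresponds to $c_1=1$, giving $\P(\beta_1\geq t)=p$. Comparing, $3p^2-2p^3>p$ rearranges to $p(1-p)(2p-1)>0$, which holds precisely on $\tfrac{1}{2}<p<1$. Hence for every $(p,t)$ with $\tfrac{1}{2}<p\leq t\leq \tfrac{2}{3}$ one has $\pi(p,t)\geq 3p^2-2p^3>p$, so bold play is not optimal.

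There is essentially no obstacle in this argument: once the candidate $\gamma^\ast$ is identified from the discussion of Theorem~\ref{thm3}, the verification reduces to a one-line factorization. Alternatively, one can invoke Theorem~\ref{thm3} directly: for $n=3$ and $p\geq \tfrac{1}{2}$, Fishburn et al.\ give $p(\mathcal{F}_{>n/2})\geq p(\mathcal{F}_1)=p$, strictly for $p>\tfrac{1}{2}$, and the intersecting family $\mathcal{F}_{>n/2}$ is realized by $\gamma^\ast$ under the threshold assumption $t\leq \tfrac{2}{3}$.
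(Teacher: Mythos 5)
Your proof is correct and follows essentially the same route as the paper: both exhibit the uniform $\gamma$ on $2k+1$ coordinates, which realizes $\mathcal{F}_{>n/2}$, and compare against bold play's value $p$. You streamline the argument by fixing $k=1$ (sufficient since $t\leq \tfrac23$) and verifying $3p^2-2p^3>p$ by direct factorization, rather than choosing $k$ maximal and invoking Theorem~\ref{thm3}; the alternative you sketch at the end is exactly the paper's argument.
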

\begin{proof}
Choose $k$ maximal such that $t\leq \frac{k+1}{2k+1}$ and set $n=2k+1$.
Then $\mathcal F_{>n/2}$ is the unique maximizer of $p(\mathcal F)$
and corresponds to 
$c_1=\cdots=c_{2k+1}=\frac 1{2k+1}$, which is not bold play.
\end{proof}

Finally, we settle the remaining part of the box in
the upper left corner of figure~\ref{fig:fig1}.

\begin{corollary}\label{cor3}
If $p\leq t=\frac 12$ then bold play is optimal.
\end{corollary}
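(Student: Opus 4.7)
The plan is to condition on $\beta_1$ and reduce the problem to a threshold strictly greater than $1/2$, on which Corollary~\ref{cor1} is already available. By Proposition~\ref{pro2}, it suffices to bound $\P(S_\gamma \geq 1/2)$ for a finite convex combination with coefficients $c_1 \geq \cdots \geq c_n > 0$ summing to one. Writing $S' = \sum_{i \geq 2} c_i \beta_i$ and splitting on the value of $\beta_1$ gives
\[
\P(S_\gamma \geq \tfrac{1}{2}) \;=\; p\cdot\P(S' \geq \tfrac{1}{2} - c_1) \;+\; (1-p)\cdot\P(S' \geq \tfrac{1}{2}),
\]
and the first summand is trivially bounded by $p$, so only the second summand needs real work.

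For the second summand I would rescale. Setting $\bar c_i = c_i/(1-c_1)$ for $i \geq 2$ gives a bona fide convex combination of $n-1$ independent Bernoulli($p$) variables, call it $\bar S'$, and $\{S' \geq 1/2\} = \{\bar S' \geq 1/(2(1-c_1))\}$. Because $c_1 > 0$, the new threshold $1/(2(1-c_1))$ is strictly greater than $1/2$, so Corollary~\ref{cor1} yields $\P(\bar S' \geq 1/(2(1-c_1))) \leq p$. (If $c_1 \geq 1/2$ the rescaled threshold is at least $1$ and the bound is trivial; the edge case $n=1$, where $S' \equiv 0$, is immediate.) Combining,
\[
\P(S_\gamma \geq \tfrac{1}{2}) \;\leq\; p + (1-p)p \;=\; 2p - p^2.
\]

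For the matching lower bound, bold play at $t = 1/2$ is $k = \lfloor 1/t \rfloor = 2$, i.e., $\gamma = (\tfrac{1}{2}, \tfrac{1}{2}, 0, \ldots)$, and it attains $\P(\beta_1 + \beta_2 \geq 1) = 1 - (1-p)^2 = 2p - p^2$. Hence $\pi(p, 1/2) = 2p - p^2$ and bold play is optimal.

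The only point that deserves flagging is why a naive continuity argument does not work: $\pi(p, \cdot)$ is not right-continuous at $t = 1/2$ — it jumps from $p$ on the open set $t > 1/2$ (Corollary~\ref{cor1}) to $2p - p^2$ at $t = 1/2$ — so one cannot just take limits from the strict case. The conditioning-plus-rescaling step is exactly what converts the boundary threshold $1/2$ into an off-boundary threshold $1/(2(1-c_1)) > 1/2$ for the truncated sum, which is where Corollary~\ref{cor1} is applicable.
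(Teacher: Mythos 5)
Your proof is correct and takes essentially the same route as the paper: condition on $\beta_1$, bound the $\beta_1=1$ branch trivially by $p$, rescale the $\beta_1=0$ branch to a convex combination with threshold $\frac{1}{2(1-c_1)}>\frac12$, and invoke the $t>\frac12$ result to bound that branch by $p$ as well. The only cosmetic difference is that you cite Corollary~\ref{cor1} where the paper cites Theorem~\ref{thm3} directly (the former is an immediate consequence of the latter), and you spell out the edge cases $c_1\geq\frac12$ and $n=1$ a bit more explicitly.
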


\begin{proof}
Note that we may restrict our attention to $\gamma=(c_1,c_2,\ldots)$ such
that $c_1\leq \frac 12$ and that bold play corresponds to $(\frac 12,\frac 12,0,\ldots)$. 
\[
\begin{array}{ccl}
\P(S_\gamma\geq \frac 12)&=&p\P(S_\gamma\geq \frac 12\mid \beta_1=1)+(1-p)\P(S_\gamma\geq \frac 12\mid\beta_1=0)\\
&\leq&p+(1-p)\P(S_{\gamma}\geq \frac 12\mid\beta_1=0)
\end{array}
\]
If $\bar\gamma=\frac 1{1-c_1}(c_2,c_3,\ldots)$ then $\P(S_{\gamma}\geq \frac 12\mid\beta_1=0)=\P(S_{\bar\gamma}\geq \frac 1{2(1-c_1)})\leq p$ by theorem~\ref{thm3}.
We find that $\P(S_\gamma\geq\frac 12)\leq p+(1-p)p$ with equality for bold play.
\end{proof}

These results take care of the box in the upper left corner of figure~\ref{fig:fig1}, including
its boundaries.

\section{Unfavorable odds}

A family $\mathcal F\subset 2^{\{1,\ldots,n\}}$ has \emph{matching number} $k$, denoted by $\nu(\mathcal F)=k$, 
if the maximum number of
pairwise disjoint $V\in\mathcal F$ is equal to $k$. In particular, $\mathcal F$ is
intersecting if and only if~$\nu(\mathcal F)=1$. 
The matching number of $\mathcal F_{t,\gamma}$ is bounded by $\frac 1t$, because
$\sum_{j\in V}c_j\geq t$ for each $V\in\mathcal F_{t,\gamma}$ and $\gamma$ sums up to one.

A family $\mathcal F^u$ is \emph{$u$-uniform} if all its elements have cardinality $u$. 
According to the Erd\H{o}s matching conjecture~\cite{A, F, FK}, if $n\geq (k+1)u$
then the maximum cardinality $|\mathcal F^u|$
of a $u$-uniform family such that $\nu(\mathcal F^u)\leq k$ is either
attained by $\mathcal F_k^u$, the family of all $u$-subsets containing at least one element
from $\{1,\ldots,k\}$, or by $\mathcal F_{[(k+1)u-1]}^u$, the family containing
all $u$-subsets from $\{1,\ldots,(k+1)u-1\}$. Frankl~\cite{F} proved that $\mathcal F_k^u$
has maximum cardinality if $n\geq (2k+1)u-k$.
For recent progress on this conjecture,
see~\cite{FK} and the references therein.

\begin{theorem}\label{thm18}
If $p<\frac 1{2k+1}$ and $\frac 1{k+1}<t$ 
for some positive integer $k$
then bold play is optimal.
\end{theorem}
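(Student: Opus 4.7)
By Proposition~\ref{pro2} I may restrict to finite $\gamma=(c_{1},\dots,c_{n})$ with $c_{1}\geq\cdots\geq c_{n}>0$. If $t>\frac{1}{k}$, I first replace $k$ by $k'=\lfloor 1/t\rfloor<k$: both $p<\frac{1}{2k+1}\leq\frac{1}{2k'+1}$ and $t>\frac{1}{k'+1}$ are preserved, so I may assume throughout that $\frac{1}{k+1}<t\leq\frac{1}{k}$. Bold play then stakes $\frac{1}{k}$ on $k$ bets with success probability $1-(1-p)^{k}$, and the task becomes proving $\mathbf{P}(S_{\gamma}\geq t)\leq 1-(1-p)^{k}$.

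Setting $\mathcal{F}=\mathcal{F}_{t,\gamma}$, the inequality $(k+1)t>1=\sum c_{i}$ forbids $k+1$ pairwise disjoint members, so $\nu(\mathcal{F})\leq k$ and the same holds for every cardinality slice $\mathcal{F}^{u}$. By equation~(\ref{equ2}),
\[
\mathbf{P}(S_{\gamma}\geq t)=\sum_{u=0}^{n}|\mathcal{F}^{u}|\,p^{u}(1-p)^{n-u},
\]
and the target $1-(1-p)^{k}$ equals the telescoping sum $\sum_{u}\bigl(\binom{n}{u}-\binom{n-k}{u}\bigr)p^{u}(1-p)^{n-u}$. So it would suffice to prove $|\mathcal{F}^{u}|\leq|\mathcal{F}^{u}_{k}|=\binom{n}{u}-\binom{n-k}{u}$ for every $u$, and Frankl's theorem (quoted just before the statement) delivers exactly this whenever $n\geq(2k+1)u-k$.

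The first case I would dispose of is $c_{1}+\cdots+c_{k}>1-t$: then $\sum_{i>k}c_{i}<t$, so no $V\subseteq\{k+1,\dots,n\}$ is in $\mathcal{F}$, hence $\mathcal{F}\subseteq\{V:V\cap[k]\neq\emptyset\}$, and the bound $\mu_{p}(\mathcal{F})\leq 1-(1-p)^{k}$ is immediate. This settles all $n\leq k+1$ at once, because then $c_{k+1}\leq 1/(k+1)<t$ already forces the top $k$ coordinates to sum to more than $1-t$.

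The hard part is the complementary regime $c_{1}+\cdots+c_{k}\leq 1-t$, where $\{k+1,\dots,n\}$ itself belongs to $\mathcal{F}$ and the slices $\mathcal{F}^{u}$ with $u>(n+k)/(2k+1)$ escape Frankl's hypothesis. For those $u$ I would start from the trivial bound $|\mathcal{F}^{u}|\leq\binom{n}{u}$ and use $p<\frac{1}{2k+1}$ to compensate: since $\mathrm{Bin}(n,p)$ has mean $np<\frac{n}{2k+1}$, a Chernoff-type estimate controls its mass above $U:=\lfloor(n+k)/(2k+1)\rfloor$, which must then match or beat the $(1-p)^{k}\,\mathbf{P}(\mathrm{Bin}(n-k,p)>U)$ slack that naive telescoping gives up. The principal obstacle is making this balance actually close; I expect to need a sharper bound on $|\mathcal{F}^{u}|$ in the intermediate range, and a natural candidate is the second extremal family from the Erd\H{o}s matching conjecture, $\mathcal{F}^{u}_{[(k+1)u-1]}$ of size $\binom{(k+1)u-1}{u}$, which is much smaller than $\binom{n}{u}$ exactly when $u$ is large compared with $n/(k+1)$.
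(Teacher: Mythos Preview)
Your setup matches the paper's up to the point where you separate the slices $u\leq U:=\lfloor(n+k)/(2k+1)\rfloor$ (where Frankl's bound applies) from the slices $u>U$ (where you fall back on $\binom{n}{u}$). But then you get stuck: you call making the balance close ``the principal obstacle'' and speculate about needing the second extremal family $\mathcal{F}^{u}_{[(k+1)u-1]}$ from the Erd\H{o}s matching conjecture.

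The paper resolves this far more simply, via a freedom you did not exploit: $n$ is not determined by $\gamma$. You may pad $\gamma$ with arbitrarily many trailing zeros without changing $S_{\gamma}$, and padding preserves $\nu(\mathcal{F}_{t,\gamma})\leq k$ (the matching argument only uses $\sum c_{i}=1$ and $(k+1)t>1$, neither of which is affected). Carrying out the very computation you began, with Frankl's bound for $u\leq U$ and the trivial bound for $u>U$, gives
\[
\mathbf{P}(S_{\gamma}\geq t)\;\leq\;1-(1-p)^{k}\,\mathbf{P}\bigl(\mathrm{Bin}(n-k,p)\leq U\bigr).
\]
Since $p<\frac{1}{2k+1}$, the binomial mean $(n-k)p$ stays a fixed fraction below $\frac{n+k}{2k+1}$, so $\mathbf{P}\bigl(\mathrm{Bin}(n-k,p)\leq U\bigr)\to 1$ as $n\to\infty$. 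Letting $n\to\infty$ gives $\mathbf{P}(S_{\gamma}\geq t)\leq 1-(1-p)^{k}$ and finishes the proof. Your case split on $c_{1}+\cdots+c_{k}>1-t$ and your proposed recourse to $\mathcal{F}^{u}_{[(k+1)u-1]}$ are both unnecessary: the ``hard part'' evaporates once $n$ is allowed to grow.
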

\begin{proof}
We need to prove that $\P(S_\gamma\geq t)\leq 1-(1-p)^k$ for finite~$\gamma=(c_1,c_2,\ldots)$.
For a large enough $n$ we have that $c_j=0$ if $j>n$. 
We have 
\[
\P(S_\gamma\geq t)=\sum_{\mathcal F_{t,\gamma}} p(V)=\sum_{j} |\mathcal F_{t,\gamma}^j|p^j(1-p)^{n-j}
\]
where $|\mathcal F_{t,\gamma}^j|$ denotes the number of subsets of cardinality $j$. 
By Frankl's result, we can put a bound on $|\mathcal F_{t,\gamma}^j|$ if $(2k+1)j-k\leq n$.
For larger $j$ we simply bound by $\binom{n}{j}$. In this way we get
that $\P(S_\gamma\geq t)$ is bounded by
\[
\sum_{j\leq {\frac {n+k}{2k+1}}} \left(\binom{n}{j}-\binom{n-k}{j}\right)p^j(1-p)^{n-j}
+
\sum_{j> {\frac {n+k}{2k+1}}}\binom{n}{j}p^j(1-p)^{n-j}
\]
which is equal to
\[
1-\sum_{j\leq {\frac {n+k}{2k+1}}}\binom{n-k}{j}p^j(1-p)^{n-j}
=1-(1-p)^k\P\left(X\leq \frac{n+k}{2k+1}\right)
\]
for $X\sim \mathrm{Bin}(n-k,p)$.
By our assumptions, there exists a $c<1$ such that $p<\frac c{2k+1}$.
If $n\to\infty$ then $\P\left(X\leq \frac{n+k}{2k+1}\right)\to 1$ since 
$\E[X]=(n-k)p<\frac{(n-k)c}{2k+1}$.
\end{proof}

We can push this result to the hypothenuse, using the same approach as
in the proof of proposition~\ref{thm2diag}, in one particular case: $p=t=\frac 13$.
By stability, one would expect that bold play is optimal for any $p<t=\frac 13$ but we can only
prove this for $p=\frac 1b$ for integers $b>3$.

\begin{proposition}\label{pro19}
Bold play is optimal if $t=\frac {1}{3}$ and $p=\frac 1b$ for an integer $b\geq 3$.
\end{proposition}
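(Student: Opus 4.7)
My plan mirrors the bucket-assignment strategy from the proof of proposition~\ref{thm2diag}. Let $X_1,\ldots,X_n$ be iid uniform on $\{0,1,\ldots,b-1\}$ and set $\beta_i:=\mathbf{1}_{X_i=0}$, so each $\beta_i\sim\mathrm{Bernoulli}(1/b)$. Writing $\ell(j)=\sum_{i:X_i=j}c_i$ for the load of bucket $j$, one has $\ell(j)\sim S_\gamma$ for every $j$, and by symmetry
\[
\P\bigl(S_\gamma\geq\tfrac{1}{3}\bigr)\;=\;\frac{\E[J]}{b},\qquad J\;:=\;\bigl|\{\,j:\ell(j)\geq\tfrac{1}{3}\,\}\bigr|.
\]
Bold play $\gamma_{\mathrm{bold}}=(\tfrac{1}{3},\tfrac{1}{3},\tfrac{1}{3},0,\ldots)$ makes $J$ the number of distinct values among $(X_1,X_2,X_3)$, and gives $\E[J_{\mathrm{bold}}]=b\bigl(1-(1-1/b)^3\bigr)=3-3/b+1/b^2$. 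So the target inequality is $\E[J]\leq b\bigl(1-(1-1/b)^3\bigr)$. Since $\sum_j\ell(j)=1$, $J\leq 3$; on $\{J=3\}$ three buckets carry load exactly $\tfrac{1}{3}$ and the remaining $b-3$ are empty, which also forces $c_1\leq\tfrac{1}{3}$.

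The main step is a swap argument bounding $\P(J=3)$, adapting the treatment of $\{J=k+2\}$ in proposition~\ref{thm2diag}. Given $\omega$ with $J(\omega)=3$, strip off the two smallest nonzero coefficients $c_{n-1}$ and $c_n$; depending on whether $X_{n-1}(\omega)=X_n(\omega)$ the pre-placement state has one or two buckets strictly below $\tfrac{1}{3}$, with the others at $\tfrac{1}{3}$ (plus $b-3$ empty buckets when $b>3$). For $b=3$, reassign $c_{n-1}$ and $c_n$ into the already-saturated buckets instead of restoring them to the under-saturated ones, producing an $\bar\omega$ with $J(\bar\omega)\leq 2$; one splits on $c_{n-1}\neq c_n$ (single swap) versus $c_{n-1}=c_n$ (paired swap of unordered pairs) exactly as in the proof of proposition~\ref{thm2diag}. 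For $b>3$ it suffices to push $c_n$ alone into an empty bucket: injectivity is recovered by reading off the displaced coefficient from the singleton load $c_n$ in $\bar\omega$ and the source bucket from the complementary load $\tfrac{1}{3}-c_n$. Combined with the a priori bound $J\leq 3$ and careful case accounting, the resulting injection into $\{J\leq 2\}$ yields $\E[J]\leq b\bigl(1-(1-1/b)^3\bigr)$, matching bold play.

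The hard part is the bookkeeping around ties and coincidences, as it was already in the proof of proposition~\ref{thm2diag}. The subcase $c_{n-1}=c_n$ must be handled at the level of unordered pairs to preserve injectivity, and for $b>3$ one has to rule out pathological equalities (for example $c_n=\tfrac{1}{6}$, at which $c_n=\tfrac{1}{3}-c_n$ and two loads in $\bar\omega$ collide) that would obstruct unique reconstruction of $\omega$ from $\bar\omega$. A second delicate point is that the passage from $b=3$ to general $b$ is not accomplished by any known monotonicity principle — the stability of bold play is open — so the empty-bucket swap for $b>3$ has to be set up as a separate argument, which is precisely why the proposition is restricted to $p$ of the form $1/b$.
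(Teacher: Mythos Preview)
Your random-bucket setup and the identity $\P(S_\gamma\geq\tfrac13)=\E[J]/b$ are correct and match the paper's framework. But the proof strategy you propose---a swap/injection from $\{J=3\}$ in the style of proposition~\ref{thm2diag}---does not go through as stated, and the paper takes a different route.

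There are two concrete gaps. First, the swap in proposition~\ref{thm2diag} explicitly requires $k>1$ so that at least \emph{two} buckets are already saturated before the final two coefficients are placed; this is what makes the map $\omega\mapsto\bar\omega$ injective (or a bijection of unordered pairs). In your situation the analogue is $k=1$: when $b=3$ and $X_{n-1}\neq X_n$, only \emph{one} bucket is pre-saturated, so ``reassign $c_{n-1}$ and $c_n$ into the already-saturated buckets'' collapses both into a single location, and the paired-swap bookkeeping of proposition~\ref{thm2diag} breaks down (indeed, for bold play itself one checks $\P(J=3)=2/9>1/9=\P(J=1)$, so no injection $\{J=3\}\to\{J=1\}$ can exist). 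Second, even granting some injection $\{J=3\}\hookrightarrow\{J\leq 2\}$, this yields at best $\P(J=3)\leq\tfrac12$, which does not recover the non-integer target $\E[J]\leq 3-3/b+1/b^2$; swap arguments of this type naturally produce integer bounds like $\E[J]\leq k+1$, and you give no mechanism for the $-3/b+1/b^2$ correction. The pathologies you flag (e.g.\ $c_n=\tfrac16$) are symptoms of the same problem rather than isolated edge cases to be patched.

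The paper avoids all of this by conditioning instead of swapping. It lets $K\in\{0,1,2\}$ be the number of buckets whose load already reaches $\tfrac13$ before $c_{n-1},c_n$ are placed, and bounds $\P\bigl(\ell(0)\geq\tfrac13\,\big|\,K=j\bigr)\leq r:=1-(1-p)^3$ for each $j$ by elementary symmetry: given $K=j$, the $j$ pre-saturated positions are exchangeable among the $b$ buckets, and $(X_{n-1},X_n)$ is independent of the first $n-2$ placements. For $K=0$ one needs at least one of the last two coefficients at bucket $0$; for $K=1$ one splits on whether the pre-saturated bucket is $0$; for $K=2$ one splits on whether one of the two pre-saturated buckets is $0$, and otherwise \emph{both} remaining coefficients must land at $0$. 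Each case is a one-line computation, and the final one uses $p\leq\tfrac13$. This is both simpler and gives the sharp constant directly.
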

\begin{proof}
We may assume that $\gamma$ is finite and we randomly
assign its coefficients to $\{0,1,\ldots,b-1\}$. 
We denote $\ell(0)$, the load at zero, by $Y$ and
we need to prove that 
\[\P\left(Y\geq \frac 13\right)\leq p+(1-p)p+(1-p)^2p=r\]
which is the success probability of bold play if $t=\frac 13$. 
Let $K$ be the number of loads exceeding the threshold of $\frac 13$ before
the last two coefficients $c_{n-1}$ and $c_n$ are assigned.
Obviously, $K$ is either equal to $0$ or $1$ or $2$.
We will show that $\P(Y\geq \frac 13|K=j)\leq r$ for $j\in\{0,1,2\}$.

Suppose that $K=0$. In this case, if $Y$ reaches the threshold, then 
at least one of the two final coefficients has
to be placed in $0$. 
This happens with probability $p+(1-p)p<r$.

Suppose that $K=1$. One load has reached the threshold before the final
two coefficients are placed. This load is in $0$ 
with probability $p$. If the load is not in $0$, then
at least one of the remaining two coefficients has to be placed there.
This happens with probability $p+(1-p)p$. We conclude that
\[
\P\left(Y\geq \frac 13\,\middle\vert\, K=1\right)\leq p + (1-p)(p+(1-p)p)=r.
\]

Suppose that $K=2$. In other words, two loads have already reached the threshold before the final
two coefficients are placed. The probability that one of these two loads
is in $0$ is $2p$.
If none of the two loads is in $0$, then $\ell(0)$ can only reach the threshold if 
both remaining coefficients are assigned to $0$. The probability that this happens is $p^2$. 
\[
\P\left(Y\geq \frac 13\,\middle\vert\, K=2\right)\leq 2p+(1-2p)p^2\leq r
\] 
if $p\leq \frac 13$.
\end{proof}

\section{Binomial tails}

If conjecture~\ref{conj1} holds, then the tail probability is maximized by
a Bernoulli average $\bar X_k$ and we need to determine the optimal $k$.
It is more convenient to state this in terms of binomials. For a fixed $p$
and $t$, maximize 
\[
\mathbf P\left(\mathrm{Bin}(k,p)\geq kt\right)
\]
for a positive integer~$k$.  
Since the probability increases if $k$ increases and
$kt$ does not pass an integer,
we may restrict our attention to $k$ such that $kt\leq n< (k+1)t$
for some integer $n$. In other words, we need to only consider
$k=\lfloor \frac n t\rfloor$ for $n\in\mathbb N$.
If $t=\frac 1a$ is the reciprocal of an integer $a$, then the $k$ are multiples of $a$.
This is a classical problem. In 1693 John Smith asked which $k$ is optimal
if $a=6$ and $p=\frac 16$. Or in his original words, which of the following
events is most likely: fling at least one six with 6 dice, or at least two sixes with 12 dice,
or at least three sixes with 18 dice. The problem was communicated by Samuel Pepys
to Isaac Newton, who computed the probabilities. 
Chaundy and Bullard~\cite{CB} gave a very nice historical description 
(more history can be found in~\cite{KS,KS2}) and solved the problem,
see also~\cite{JS}. 

\begin{theorem}[Chaundy and Bullard]\label{CBthm}
For an integer $a>1$,
$\P(\mathrm{Bin}(ka,\frac 1a)\geq k)$ is maximal for $k=1$.
Even more so, the tail probabilities strictly decrease with $k$.
\end{theorem}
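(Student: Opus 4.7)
The plan is to write $P_k-P_{k+1}$ in closed form via two standard binomial-tail recursions and then verify strict positivity by an elementary termwise bound. The first recursion I will use is
\[
\P(\mathrm{Bin}(n+1,p)\geq k)=\P(\mathrm{Bin}(n,p)\geq k)+p\,\P(\mathrm{Bin}(n,p)=k-1),
\]
obtained by conditioning on the outcome of an added trial, and the second is the trivial identity
\[
\P(\mathrm{Bin}(n,p)\geq k+1)=\P(\mathrm{Bin}(n,p)\geq k)-\P(\mathrm{Bin}(n,p)=k).
\]
Starting from $P_k=\P(\mathrm{Bin}(ka,p)\geq k)$ with $p=1/a$, I will first raise the threshold from $k$ to $k+1$ and then add $a$ trials one at a time to reach $P_{k+1}$. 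Writing $q=(a-1)/a$ and collecting, this should yield the identity
\[
P_k-P_{k+1}=p^{k}q^{k(a-1)}\left[\binom{ka}{k}-\frac{1}{a}\sum_{j=0}^{a-1}\binom{ka+j}{k}q^{j}\right].
\]

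Strict positivity of the right-hand side then reduces to the inequality
\[
a\binom{ka}{k}>\sum_{j=0}^{a-1}\binom{ka+j}{k}q^{j}.
\]
I will prove this by comparing each summand with the $j=0$ term. Using
\[
\frac{\binom{ka+j}{k}}{\binom{ka}{k}}=\prod_{i=1}^{j}\frac{ka+i}{k(a-1)+i},
\]
a cross-multiplication gives $\frac{ka+i}{k(a-1)+i}<\frac{a}{a-1}=\frac{1}{q}$ for every $i\geq 1$, so each factor in the product is strictly less than $1/q$. It follows that $\binom{ka+j}{k}q^{j}<\binom{ka}{k}$ for every $j\geq 1$; summing the $a-1$ strict inequalities together with the equality at $j=0$ yields the desired bound, and hence $P_k>P_{k+1}$. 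Since the argument is valid for every $k\geq 1$, the strict monotonicity (and in particular maximality at $k=1$) follows.

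The main obstacle is the bookkeeping in the telescoping step: one has to apply the two recursions in the right order to separate the contribution of raising the threshold from that of adding trials, and keep track of the binomial coefficients that accumulate. Once the closed-form identity is in place, the remaining inequality is a short exercise in comparing rational factors, with no combinatorial identity needed.
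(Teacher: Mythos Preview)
Your argument is correct and coincides with the elementary proof the paper records (after its \verb|\end{document}|): both reduce $P_k-P_{k+1}>0$ to the inequality $1>\frac{1}{a}\sum_{j=0}^{a-1}\frac{(ka+1)_j}{(k(a-1)+1)_j}\,q^{j}$ and verify it by the same factor comparison $\frac{ka+i}{k(a-1)+i}<\frac{a}{a-1}=\frac{1}{q}$. The only cosmetic difference is that the paper reaches the identity via the negative-binomial waiting times $T_k$, while you telescope with the one-trial recursion; the paper also sketches an alternative one-line proof via Hoeffding's theorem (its Theorem~\ref{thmH}), which you do not use.
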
 

In other words, if $p=t=\frac 1a$ and if Cs\'oka's conjecture holds,
then bold play is optimal.
By stability, one would expect that bold play is optimal
for $p\leq \frac 1a\leq t$. 
It turns out that it 
is possible to extend Chaundy and Bullard's theorem in this
direction and
prove that
$\P(\mathrm{Bin}(ka,p)\geq k)$ decreases with $k$
for arbitrary $p\leq \frac 1a$,
see~\cite[Theorem 1.5.4]{Pel}.

\section{Conclusion}

We settled Cs\'oka's conjecture for a range of parameters building on 
combinatorial methods. 
Cs\'oka's conjecture predicts that $\P(S_\gamma\geq t)$ attains
its maximum at an extreme point of the subset of positive
non-increasing sequences $\gamma$ in the unit ball in $\ell_1$.
Perhaps variational methods need to be considered.

Christos Pelekis was supported by the Czech Science Foundation
(GA\v{C}R project 18-01472Y), by
the Czech Academy of Sciences (RVO: 67985840),
and by a visitor grant of the Dutch mathematics cluster Diamant.

\bigskip
\noindent
Institute of Applied Mathematics\\
Delft University of Technology\\
Mourikbroekmanweg 6 \\
2628 XE Delft, The Netherlands\\
\texttt{r.j.fokkink@tudelft.nl, l.e.meester@tudelft.nl}
\\[4mm]
School of Electrical and Computer Engineering\\
National Technical University of Athens\\
Zografou, 15780, Greece\\ 
\texttt{pelekis.chr@gmail.com}
\end{document}